\newcommand{\C}{\mathbb{C}}    
\newcommand{\N}{\mathbb{N}}    
\newcommand{\R}{\mathbb{R}}    
\newcommand{\Z}{\mathbb{Z}}    
\newcommand{\la}{\left\langle}
\newcommand{\ra}{\right\rangle}
\newcommand{\setsep}{\;:\;}     
\newcommand{\cH}{\mathcal{H}}  
\newcommand{\cB}{\mathcal{B}}  
\newcommand{\cI}{\mathcal{I}}  
\newcommand{\cV}{\mathcal{V}}  
\newcommand{\cW}{\mathcal{W}}  
\newcommand{\diam}{\mathrm{diam}}
\newcommand{\supp}{\mathrm{supp}}
\newcommand{\diag}{\mathrm{diag}}
\newcommand{\spn}{\mathrm{span}}
\newcommand{\Id}{\mathrm{I}}
\newcommand{\gph}{\mathcal{G}} 
\newcommand{\dG}{\mathrm{deg}}  
\begin{document}

\title{Adaptive directional Haar tight framelets on bounded domains for digraph signal representations 
}

\titlerunning{Adaptive directional Haar tight framelets for digraph signal representations}        

\author{Yuchen Xiao         \and Xiaosheng Zhuang
}


\institute{Y. Xiao and X. Zhuang (Corresponding Author) \at
              Department of Mathematics, City University of Hong Kong, Tat Chee Avenue, Kowloon Tong, Hong Kong.
              \email{yuchexiao2@cityu.edu.hk; xzhuang7@cityu.edu.hk}           
              }

\date{Received: date / Accepted: date}

\maketitle

\begin{abstract}
Based on  hierarchical partitions,  we provide the  construction of Haar-type tight framelets on any  compact set $K\subseteq \R^d$. In particular, on the unit block $[0,1]^d$,  such tight framelets can be built to be with adaptivity and directionality.  We show that the adaptive directional Haar tight framelet systems can be used for  digraph signal representations. Some examples are provided to illustrate results in this paper.


\keywords{directional Haar tight framelets \and adaptive systems \and bounded domains\and graph signal processing \and digraph signal  \and graph clustering \and coarse-grained chain \and network \and deep learning \and machine learning}
\end{abstract}


\section{Introduction and motivation}
\label{sec:intro}

Harmonic analysis including Fourier analysis, frame theory, wavelet/framelet analysis, etc., has been one of the most active research areas in mathematics over the past two centuries \cite{Book:Stein1993}. Typical harmonic analysis is on theory and applications related to functions defined on regular Euclidean domains \cite{Book:Chui,CDV,Book:Daubechies1992,Book:Han,HM:AA,HanZhuang:alg.num,Book:Shearlets,Book:Mallat}. In recent years, driven by the rapid progress of deep learning and their successful applications in solving AI (artificial intelligence) related tasks, such as natural language processing, autonomous systems, robotics,  medical diagnostics, and so on, there has been a great interest in developing harmonic analysis  for data defined on non-Euclidean domains such as manifold data or graph data, e.g., see \cite{Bronstein_etal2017,qiyu2,treepap,CMZ:ACHA:18,Dong2017,qiyu3,hammond,Qiyu,Pesenson,Pesenson2,WaZh2018} and many references therein. For example,  data in machine/statistical learning, are typically from social networks, biology, physics, finance, etc., and can be naturally obtained or organized as graphs or graph data. Such data can be regarded as samples from an underlying manifold, where its \emph{graph Laplacian} is connected to the \emph{manifold Laplacian} encoding the essential information of the data to be exploited by various machine/deep learning approaches \cite{singer}. One also refers this area as the \emph{graph signal processing} (GSP) in contrast to signal/image/video processing \cite{Bronstein_etal2017}.

For graph signal analysis, the underlying graphs are typically directed graphs (or digraphs). For example, the citation networks modelling  relations among papers (as nodes) are digraphs where a paper can either be cited or cite other papers, which indicates a directed edge between two nodes; the information networks \cite{newman2003structure} with nodes consisting of URLs of web pages are digraphs, where an edge means that there is an URL in one web page linking to another web page; the traffic networks \cite{han2012extended} in modern cities with nodes representing intersections and edges representing traffic flows from one node to another are digraphs; the human body networks,  the nervous systems, and biological networks, etc., are all digraphs. The interested reader can refer to  \cite{malliaros2013clustering,przulj2011introduction,smith2013role} and many references therein. Similar to the wavelets and framelets for signal/image processing, multiscale representation systems based on various approaches such as spectral theory \cite{chung1997spectral}, diffusion wavelets\cite{achaspissue}, non-spectral construction \cite{treepap,CMZ:ACHA:18}, etc., have also been developed for graph signal representation and processing.

In this paper, motivated by the recent development of directional Haar framelet systems on $\R^d$ \cite{HLZ:AML,Li:DHF,Li:spie} as well as wavelet-like systems for graph signal processing \cite{Bronstein_etal2017,CMZ:ACHA:18,Wa:NN,WaZh:spie,Wa:ICML}, we focus on the development of directional multiscale representation systems for signals defined on digraphs. We are going to investigate the following two main problems:
\begin{enumerate}
\item[1)] How to construct directional Haar tight framelets on bounded domains with adaptivity?
\item[2)] How to efficiently represent digraph signals?
\end{enumerate}
In what follows, we lay out the main idea of this paper for the above two problems.  The details are given in the later sections.

For the first problem, we start with Haar wavelets.  Recall that for a separable Hilbert space $\mathcal{H}$, a collection $X=\{h_j\}_{j\in\N}\subseteq\mathcal{H}$ is said to be a \emph{frame} if there exist two positive constants $0<C_1\le C_2<\infty$ such that
\[
C_1\|f\|^2\le \sum_{j\in \N} \left|\la f, h_j\ra \right|^2\le C_2\|f\|^2\quad \forall f\in \mathcal{H},
\]
where $\la\cdot,\cdot\ra$ and  $\|\cdot\|$ are the inner product and norm in $\mathcal{H}$, respectively. If $C_1=C_2$, then such an $X$ is said to be \emph{tight}. If $C_1=C_2=1$ and $\|h_j\|=1$ for all $j$, then such an $X$ is an \emph{orthonormal basis} for $\mathcal{H}$.

Haar wavelet system \cite{Haar1910} is the first ever constructed orthonormal wavelet system on the interval $[0,1]$. It is a very simple yet elegant system that even  nowadays there are many literatures on Haar-type systems, e.g.,  see \cite{treepap,CMZ:ACHA:18,HLZ:AML,Wa:NN,Wa:ICML}. Starting from a scaling (refinable) function $\phi:=\chi_{[0,1]}$, which is a characteristic function defined on the unit interval $I:=[0,1]$, and the mother wavelet function $\psi:=\chi_{[0,\frac12)}-\chi_{[\frac12,1]}$, one can show that the system
\[
X(\R;\phi,\psi):=\{\phi(\cdot-k)\setsep k\in\Z\}\cup \{\psi_{j,k}:=2^{j/2}\psi(2^j\cdot-k)\setsep k\in\Z\}_{j\in\N_0}
\]
obtained from dilations and translations of $\phi$ and $\psi$, is an orthonormal wavelet basis for $L_2(\R)$ of square-integrable functions on $\R$ \cite{Book:Daubechies1992}. Here $\N_0:=\N\cup\{0\}$. Thanks to their compact support property, the restriction of such an orthonormal wavelet basis on the unit interval $I$ directly gives an orthonormal basis on the bounded domain for $L_2([0,1])$ of square-integrable functions on $I$:
\begin{equation}\label{D1Haar}
X(I;\phi,\psi):=\{\phi\}\cup\{\psi_{j,k}\setsep 0\le k < 2^j\}_{j\in\N_0}.
\end{equation}
This is indeed the system constructed by Haar in \cite{Haar1910}.

\begin{figure}[htbp!]
\centering
\includegraphics[width=3in]{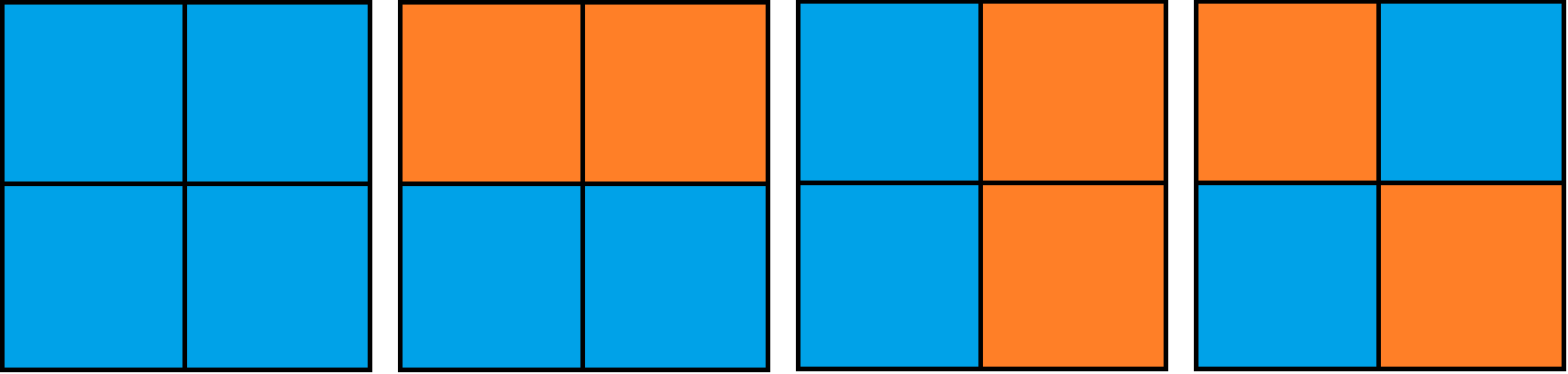}
\caption{The tensor product of 1D Haar wavelet scaling function  $\phi$ and wavelet function $\psi$. All are supported on the unit square $[0,1]^2$. The 4 big squares are (from left to right): $\phi\otimes \phi$, $\phi\otimes \psi$, $\psi\otimes \phi$, $\psi\otimes \psi$. Each colored sub-block represents either $1$ (blue) or $-1$ (orange) of the function value.}
\label{fig:D2Haar}
\end{figure}

In higher dimensions, the tensor product approach is usually employed to obtain orthonormal wavelets,  e.g., see Fig.~\ref{fig:D2Haar} for the $4$ generators $\phi\otimes \phi$, $\phi\otimes \psi$, $\psi\otimes \phi$, and $\psi\otimes \psi$ of the 2D orthonormal Haar wavelets. However, it is well-known that the tensor product orthonormal real-valued wavelets lack directionality \cite{CandesDonoho}, which hinders the sparsity representation of such systems for high-dimensional data analysis and their applications in image/video processing.
Various approaches including curvelets \cite{CandesDonoho}, shearlets \cite{AST2,shearlab,AST0,Book:Shearlets,AST1}, dual-tree complex wavelets \cite{DWT}, TP-$\C$TFs \cite{TPCTF2,TPCTF1,TPCTF3}, etc., on increasing directionality of  multiscale representation systems have been proposed over the past two decades, which we will not get into much of such developments but draw our attentions only to the main focus of this paper: \emph{Haar-type multiscale representation systems with directionality on  bounded domains}. Note that in order to increase directionality, one necessarily needs to consider  wavelet frames or framelet systems, which are more redundant representation systems than the orthonormal systems.

\begin{figure}[htbp!]
\centering
\includegraphics[width=4.5in]{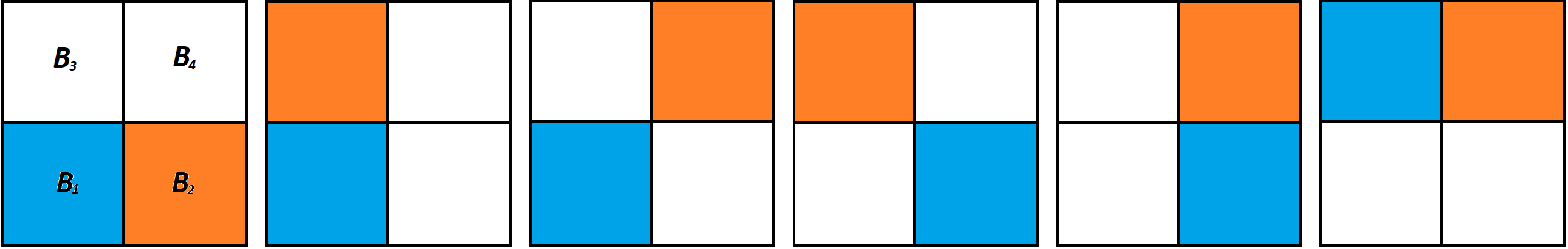}
\caption{The 6 functions in $\Psi$. Left to right: $\psi^{(1,2)},
\psi^{(1,3)},
\psi^{(1,4)}$,
$\psi^{(2,3)},
\psi^{(2,4)},
\psi^{(3,4)}
$. Each of them is supported on the unit square $[0,1]^2$, which is split to 4 sub-blocks $B_1,\ldots, B_4$.  Each colored sub-block represents either $1$ (blue) or $-1$ (orange) of the function value. White blocks mean 0 function value.}
\label{fig:D2DHF}
\end{figure}

In \cite{Li:DHF}, the authors proposed a new and simple  Haar-type directional system, the \emph{directional Haar tight framelets} (DHF) for $L_2(\R^2)$,  whose generators have 4 directions ($0^\circ, 90^\circ$, and  $\pm 45^\circ$). The system is generated from the scaling function $\varphi = \chi_I\otimes \chi_I$ and 6 generators in  $\Psi:=\{\psi^{(1,1)},\ldots, \psi^{(3,4)}\}$ defined by
\begin{equation}\label{DHF:generators}
\begin{aligned}
\psi^{(1,2)} &= \chi_{B_1}-\chi_{B_2},&
\psi^{(1,3)} &= \chi_{B_1}-\chi_{B_3},&
\psi^{(1,4)} &= \chi_{B_1}-\chi_{B_4},\\
\psi^{(2,3)} &= \chi_{B_2}-\chi_{B_3},&
\psi^{(2,4)} &= \chi_{B_2}-\chi_{B_4},&
\psi^{(3,4)} &= \chi_{B_3}-\chi_{B_4},&
\end{aligned}
\end{equation}
where $B_1:=[0,\frac12)\times [0,\frac12)$, $B_2:=[\frac12,1]\times[0,\frac12)$, $B_3:=[0,\frac12)\times [\frac12,1]$, and $B_4:=[\frac12,1]\times [\frac12,1]$ are the 4 sub-blocks obtained from  refining  the unit square $I^2 = [0,1]\times [0,1]=\cup_{\ell=1}^4 B_\ell$,  see Fig.~\ref{fig:D2DHF}. Clearly, compared to the 2D tensor product Haar wavelets (see Fig.~\ref{fig:D2Haar}), the DHF system has more directionality: the generators $\psi^{(1,2)}$ and $\psi^{(3,4)}$ can be used for vertical edge information extraction, the generators $\psi^{(1,3)}$ and $\psi^{(2,4)}$ can be used for horizontal edge information extraction, and the generators $\psi^{(1,4)}$ and $\psi^{(2,3)}$ can be used for $\pm45^\circ$ edge information extraction.  Note that the labelling  $(\ell_1,\ell_2), 1\le\ell_1<\ell_2\le 4$, is with respect to the number of choices of choosing two sub-blocks from the four sub-blocks in $[0,1]^2$  (${4 \choose 2}=6$ , see Theorem~\ref{thm:dirHaar} for more general results). The system defined by
\[
X(\R^2;\varphi,\Psi):=
\{\varphi(\cdot-k)\setsep k\in\Z^2\}\cup\{\psi_{j,k}: k\in\Z^2, \psi\in\Psi\}_{j\in\N_0},
\]
where $\psi_{j,k}:=2^{j/2}\psi(2^j\cdot-k)$,
is a tight frame  for $L_2(\R^2)$.  Its restriction to the unit square $I^2=[0,1]^2$ can be shown as
\begin{equation}\label{D2DHF}
X(I^2;\varphi, \Psi):=
\{\varphi\}\cup\{\psi_{j,k}: k=(k_1,k_2), 0\le k_1,k_2<2^j; \psi\in\Psi\}_{j\in\N_0}.
\end{equation}
This system $X(I^2; \varphi,\Psi)$
is indeed also  a tight frame for $L_2([0,1]^2)$ (see Theorem~\ref{thm:dirHaar}). Such a tightness property on $[0,1]^2$ is not explicitly shown in \cite{Li:DHF} nor in \cite{HLZ:AML}. In \cite{HLZ:AML}, the authors  further generalized such directional Haar tight framelets to arbitrary dimension $\R^d$.

By inspecting the structure of the system, we can  regroup $X(I^2;\varphi,\Psi)$ as
\begin{equation}\label{D2DHF:regroup}
X(I^2;\varphi,\Psi)=\{\varphi\}\cup \bigcup_{j=0}^\infty\bigcup_{k_1,k_2=0}^{2^j-1}\Psi_{j,(k_1,k_2)},
\end{equation}
where each
\[
\Psi_{j,k}:=\{2^{j/2}\psi^{(\ell_1,\ell_2)}(2^j\cdot-k)\setsep 1\le \ell_1<\ell_2\le4\}
\]
has 6 framelet functions supported on a sub-block
\[
B_{j,(k_1,k_2)}=[2^{-j}k_1,2^{-j}(k_1+1)]\times[2^{-j}k_2,2^{-j}(k_2+1)]\subseteq I^2
\]
at level $j$. Each $B_{j,(k_1,k_2)}$  is further refined to 4 sub-blocks  $B_{j+1,(2k_1,2k_2)}$, $B_{j+1,(2k_1+1,2k_2)}$, $B_{j+1,(2k_1,2k_2+1)}$, and $B_{j+1,(2k_1+1,2k_2+1)}$ at level $j+1$, see Fig.~\ref{fig:blocks} for the illustration. In other words, the system in \eqref{D2DHF:regroup} is based on a \emph{hierarchical partition} of the unit square $I^2$.  This  point of view together with how to sparsely represent digraph signals motivates us the main result in Theorem~\ref{thm:dirHaar}, where the question boils down to the construction of directional Haar tight framelet systems with \emph{adaptivity}.  Here, by ``adaptivity'' we mean that the blocks are not necessary square sub-blocks. We  affirmatively  show that  a system $X(\{\cB_j\}_{j\in\N_0})$,  associated with a sequence $\cB_j$ with each $\cB_j$ being a collection of subsets of a compact set $K\subseteq\R^d$ from a refining process, could be built to be a tight frame for $L_2(K)$. When $K=I^2=[0,1]^2$, such a system is our adaptive directional Haar tight framelets and it plays a key role in our second problem of efficient representations of digraph signals.

\begin{figure}[htbp!]
\centering
\includegraphics[width=0.9in]{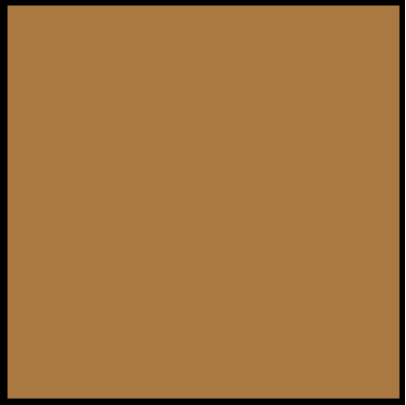}\quad
\includegraphics[width=0.9in]{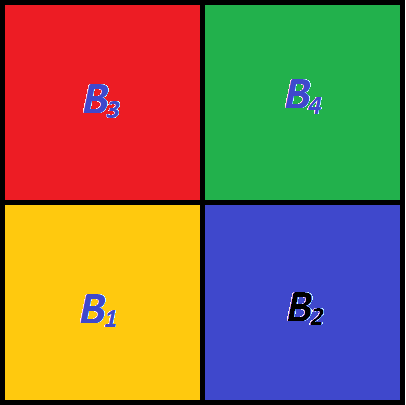}\quad
\includegraphics[width=0.9in]{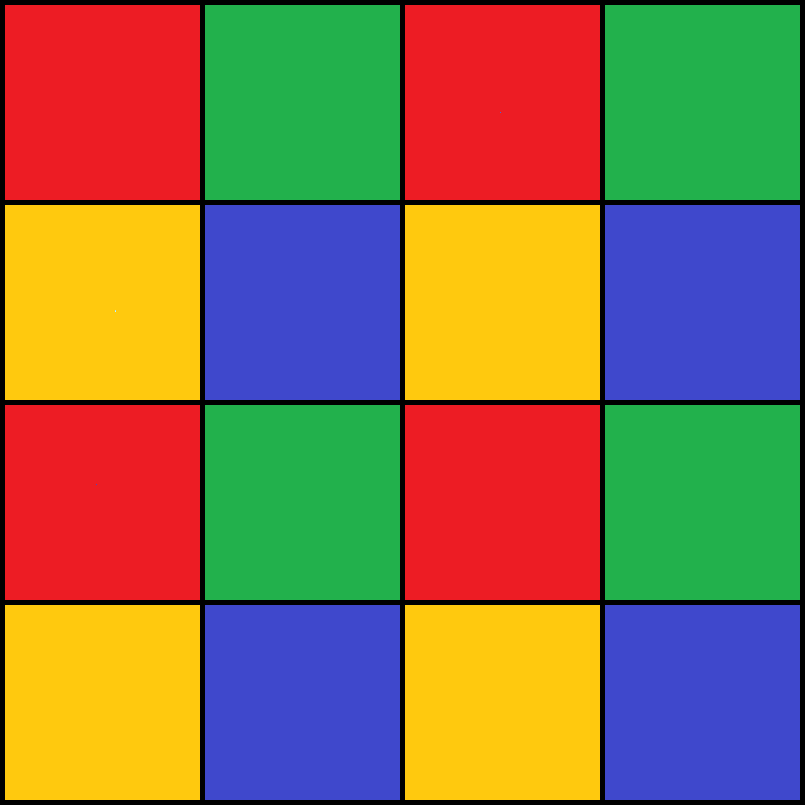}
\caption{The  unit square $I^2=[0,1]^2$ is refined to 4 sub-blocks $B_1,\ldots, B_4$. Each block $B_\ell$ is further refined to $4$ sub-blocks, and so on. Left: the unit square $I^2$ is associated with $\varphi$ and $\psi\in\Psi$ at level $j=0$. Middle:  $4$ refined blocks $B_1,\ldots, B_4$ are associated with $\Psi_{1,k}$ at level $j=1$. Right: 16 refined blocks are associated with $\Psi_{2,k}$ at level $j=2$.}
\label{fig:blocks}
\end{figure}

Now, continue to the second problem of efficient representations of signals on digraphs (directed graphs). We begin with undirected graphs. Recall that a \emph{graph} is an ordered pair $\gph=(V,W)$ with a nonempty set $V=\{v_1,\ldots,v_n\}$ of vertices and  an (weighted) \emph{adjacency matrix} $W: V\times V\rightarrow [0,\infty)$ of size $n\times n$ indicating edges  between vertices ($W(v_i,v_j)\neq 0$ if  there is an edge from the vertex $v_i$ to  $v_j$; otherwise 0). If the edges are unordered, that is, the edge from $v_i$ to $v_j$ is considered to be the same as the edge from $v_j$ to $v_i$, in which case, the matrix $W$ is symmetric,  then $\gph$ is said to be \emph{undirected}; otherwise, it is called a \emph{directed graph} or \emph{digraph},  see Fig.~\ref{fig:graphs} for an example of undirected graph and digraph. A \emph{signal} defined on a graph $\gph$ (or graph signal) is a function $f: V\rightarrow \C$.

\begin{figure}[htpb!]
\begin{minipage}{\textwidth}
\begin{center}
\begin{tikzpicture}[
roundnode/.style={circle, draw=green!60, fill=green!5, very thick, minimum size=7mm},
squarednode/.style={rectangle, draw=red!60, fill=red!5, very thick, minimum size=3mm},
]

\node[squarednode]      (v1)       at (-2.5,0)      {$a$};
\node[squarednode]      (v2)       at (-1.5,0)      {$b$};
\node[squarednode]      (v3)       at (0,0)         {$c$};
\node[squarednode]      (v4)       at (1,0)         {$d$};
\node[squarednode]      (v5)       at (2,0)         {$e$};
\node[squarednode]      (v6)       at (3.5,0)       {$f$};

\draw[->] (v1) -- (v2); \draw[]  (-2.0,0.2) node{1};
\draw[->] (v3) -- (v4);  \draw[]  (0.5,0.2) node{1};
\draw[->] (v4) -- (v5); \draw[]   (1.5,0.2) node{1};
\draw[<-] (v1.south).. controls (-2,-0.8) and (-0.5,-0.8) .. (v3.south);\draw (-1.1,-0.85) node{1};
\draw[<-] (v3.north).. controls (0.5,0.8) and (1.5,0.8)   .. (v5.north); \draw[] (1,0.85)     node{1};
\draw[->] (v3.south).. controls (0.8,-0.8) and (2.4,-0.8) .. (v6.south); \draw[] (1.9,-0.85)  node{1};
\draw[]  (5,0) node[circle,draw](g2){$\gph$};

\node[squarednode]      (v1)       at (-2.5,2)      {$a$};
\node[squarednode]      (v2)       at (-1.5,2)      {$b$};
\node[squarednode]      (v3)       at (0,2)         {$c$};
\node[squarednode]      (v4)       at (1,2)         {$d$};
\node[squarednode]      (v5)       at (2,2)         {$e$};
\node[squarednode]      (v6)       at (3.5,2)       {$f$};

\draw[-] (v1) -- (v2)  (-2.0,2.2) node{1};
\draw[-] (v3) -- (v4)   (0.5,2.2) node{1};
\draw[-] (v4) -- (v5)   (1.5,2.2) node{1};
\draw[-] (v1.south).. controls (-2,1.2) and (-0.5,1.2) .. (v3.south)  (-1.1,1.15) node{1};
\draw[-] (v3.north).. controls (0.5,2.8) and (1.5,2.8)   .. (v5.north)  (1,2.85)     node{1};
\draw[-] (v3.south).. controls (0.8,1.2) and (2.4,1.2) .. (v6.south)  (1.9,1.15)  node{1};
\draw[]  (5,2) node[circle,draw](g1){$\gph^x$};

\end{tikzpicture}
\end{center}
\end{minipage}\vspace{2mm}
\caption{An undirected graph $\gph^x=(V,W^x)$ (Top) and a digraph $\gph=(V,W)$ (Bottom) with the same vertex set $V=\{a,b,c,d,e,f\}$. Note that $W^x\neq W$ and $W^x$ is symmetric.}
\label{fig:graphs}
\end{figure}
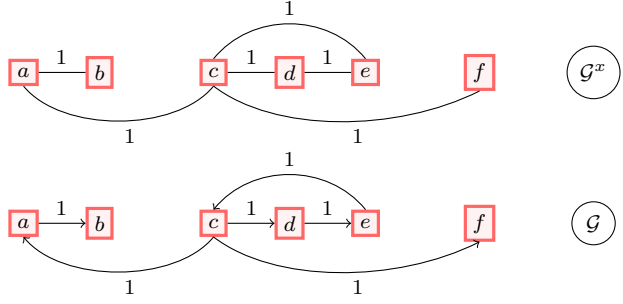

For a signal $f$ on an undirected graph $\gph=(V,W)$, one could identify it with a function on $I=[0,1]$ by  associating each vertex $v\in V$ a suitable subinterval $I_v\subseteq I$. In \cite{treepap}, the paper uses the concept of a \emph{filtration}, which a weight tree, for identifying vertices as subintervals in $I$ as well as building an (Haar-type) orthonormal basis on the filtration to represent signals on the underlying graph $\gph$. In this paper, we use the concept of the  \emph{coarse-grained chains} (\cite{Wa:NN,WaZh:spie,Wa:ICML}). Roughly speaking, a coarse-grained chain $\gph_{J\rightarrow0}:=(\gph_J,\gph_{J-1},\ldots,\gph_0)$ of $\gph\equiv\gph_J$ is a sequence of graphs such that $\gph_{j-1}$ is from the clustering result of  $\gph_{j}$. When $\gph_0$ has only one node, the coarse-grained chain is actually equivalent to a filtration in \cite{treepap}.  See Fig.~\ref{fig:coarse-grain-toy} for an example of a coarse-grained chain $\gph_{3\rightarrow0}:=(\gph_3,\ldots,\gph_0)$ of $\gph$. Each vertex in $\gph_{j-1}$ is a cluster of vertices in $\gph_{j}$. Based on such a coarse-grained chain, one can give a hierarchical representation $\{\cI_j\}_{j=0}^3$ of the interval $I$, where each $\cI_j=\{I_{j,k}\}$ is the collection of subintervals $I_{j,k}$ of $I$ such that $\cup_k I_{j,k} = I$, see Fig.~\ref{fig:coarse-grain-toy}.

Based on such a hierarchical sequence, one could build a Haar-type orthonormal basis \cite{treepap} for the function space $\spn\{\chi_{I_{3,k}}\setsep k = 1,\ldots,6\}$, which is the space for the signal defined on the graph. See Section~\ref{sec:example} for more details.

\begin{figure}[htpb!]
\begin{minipage}{\textwidth}
\centering
\begin{minipage}{\textwidth}
\begin{center}
\begin{tikzpicture}[
roundnode/.style={circle, draw=green!60, fill=green!5, very thick, minimum size=7mm},
squarednode/.style={rectangle, draw=red!60, fill=red!5, very thick, minimum size=3mm},
]

\node[squarednode]      (v1)       at (-2.3,0)      {$a$};
\node[squarednode]      (v2)       at (-1.5,0)      {$b$};
\node[squarednode]      (v3)       at (0.2,0)         {$c$};
\node[squarednode]      (v4)       at (1,0)         {$d$};
\node[squarednode]      (v5)       at (1.8,0)         {$e$};
\node[squarednode]      (v6)       at (3.1,0)       {$f$};

\draw[-] (v1) -- (v2)  (-1.9,0.2) node{1};
\draw[-] (v3) -- (v4)   (0.6,0.2) node{1};
\draw[-] (v4) -- (v5)   (1.4,0.2) node{1};
\draw[-] (v1.south).. controls (-2,-0.8) and (-0.1,-0.8) .. (v3.south)  (-0.8,-0.85) node{1};
\draw[-] (v3.north).. controls (0.5,0.8) and (1.5,0.8)   .. (v5.north)  (1,0.85)     node{1};
\draw[-] (v3.south).. controls (0.8,-0.8) and (2.4,-0.8) .. (v6.south)  (1.9,-0.85)  node{1};
\draw[]  (5,0) node[circle,draw](g3){$\gph_3$};
\draw[]  (-2.4,0.5) node{$[0,\frac16)$};
\draw[]  (-1.5,0.5) node{$[\frac16,\frac14)$};
\draw[]  (0,-0.4) node{$[\frac14,\frac{7}{12})$};
\draw[]  (1.0,-0.4) node{$[\frac{7}{12},\frac{9}{12})$};
\draw[]  (2.05,-0.4) node{$[\frac{9}{12},\frac{11}{12})$};
\draw[]  (3.1,0.5) node{$[\frac{11}{12},1]$};

\node[squarednode]      (v7)       at (-2,1.7)        {$a$\qquad $b$};
\node[squarednode]      (v8)       at (1,1.7)         {$c$\qquad $d$\qquad $e$};
\node[squarednode]      (v9)       at (3.1,1.7)       {$f$};

\draw[-] (v7) -- (v8)    (-0.75,1.9) node{1};
\draw[-] (v8) -- (v9)    (2.45,1.9)  node{1};
\draw[<->] (-2.5,2.0) arc (180:0:5mm);
\draw (-2,2.7) node{2};
\draw[<->] (0.2,2.0)  arc (150:30:9mm);
\draw (1,2.7)  node{6};
\draw[]  (5,1.7) node[circle,draw](g2){$\gph_2$};
\draw[]  (-2.0,1.3) node{$[0,\frac14)$};
\draw[]  (1.0,1.3) node{$[\frac14,\frac{11}{12})$};
\draw[]  (3.1,1.3) node{$[\frac{11}{12},1]$};

\node[squarednode]      (v10)       at (-2,3.4)        {$a$\qquad $b$};
\node[squarednode]      (v11)       at (1.75,3.4)         {$c$\qquad $d$ \qquad $e$\;\,\quad\qquad $f$};

\draw[-] (v10) -- (v11)    (-0.75,3.6) node{1};
\draw[<->] (-2.5,3.7) arc (180:0:5mm);
\draw (-2,4.4) node{2};
\draw[<->] (0.18,3.7)  arc (120:60:31mm);
\draw (1.6,4.4)  node{8};
\draw[]  (5,3.4) node[circle,draw](g1){$\gph_1$};
\draw[]  (-2.0,3.0) node{$[0,\frac14)$};
\draw[]  (1.75,3.0) node{$[\frac14,1]$};

\node[squarednode]      (v12)       at (0.35,5.1)         {$a$\qquad\;$b$\qquad\qquad\qquad$c$\qquad $d$\qquad $e$\quad\;\;\qquad $f$};
\draw[]  (5,5.1) node[circle,draw](g0){$\gph_0$};
\draw[]  (0,4.7) node{$[0,1]$};

\draw[->] (g3.north) -- (g2.south);
\draw[->] (g2.north) -- (g1.south);
\draw[->] (g1.north) -- (g0.south);

\end{tikzpicture}
\end{center}
\end{minipage}\vspace{2mm}
\begin{minipage}{0.8\textwidth}
\caption{A coarse-grained chain of $\gph$. $\gph_3$ is the underlying graph $\gph$. $\gph_{j-1}$ is from clustering of $\gph_j$ for $j=1,2,3$. Note that $\gph_0$ has one vertex only. Here each box represents a node (or cluster) in the graph, the  lines  represent edges between vertices, and the arc on a same node indicates a self-loop. $\gph_0$ can be identified as the root interval $I=[0,1]$, $\gph_1$ as $[0,\frac14)\cup [\frac14,1]$, $\gph_2$ as $[0,\frac14)\cup[\frac14,\frac{11}{12})\cup[\frac{11}{12},1]$, and $\gph_3$ as $[0,\frac{1}{6})\cup[\frac16,\frac{1}{4})\cup[\frac{1}{4},\frac{7}{12})\cup[\frac{7}{12},\frac{9}{12})\cup[\frac{9}{12},\frac{11}{12})\cup[\frac{11}{12},1]$.}
\label{fig:coarse-grain-toy}
\end{minipage}
\end{minipage}
\end{figure}

Returning to digraph signal representations, can one use similar approaches for undirected graph to represent digraph signals? The answer is \emph{yes and no}. For ``no'' it is because most of the clustering algorithms are developed based on the symmetry property of the adjacency matrix $W$ or the well-defined and well-understood operator on the undirected graph: the \emph{graph Laplacian} \cite{chung1997spectral}. It is not trivial to \emph{directly} use them for digraph cases. For ``yes'' it is because there are \emph{undirect} ways to circumvent such difficulties. In fact, one typical approach is to define a counterpart graph Laplacian on digraph, such as the \emph{Hodge Laplacian} in \cite{hodge_laplacian_lim2015}, the weighted adjacency matrix in \cite{chung_directed_laplacian}, the so-called \emph{dilaplacian} in \cite{li2012digraph}, and so on. In this paper, we use the idea developed in \cite{CMZ:ACHA:18}: to lift the dimension from one to two by using a pair of undirected graphs to represent a given digraph. In a nutshell, a digraph signal is identified as a signal defined on $I^2=[0,1]\times [0,1]$ through the following steps.

\begin{itemize}
\item[1)] In view of the singular value decomposition,  the adjacency matrix $W$ in a digrah  $\gph=(V,W)$ is uniquely determined by $WW^\top$ and $W^\top W$ from which one could construct a pair of undirected graphs $\gph^x = (V, WW^\top)$ and $\gph^y=(V,W^\top W)$.
\item[2)]  Applying well-known techniques, e.g., \cite{treepap}, for undirected graphs, one can represent vertices in each graph of $\gph^x$ and $\gph^y$ as subintervals in $I=[0,1]$.
\item[3)] Suppose a vertex $v$ is identified as a subinterval $I_v^x=[a,b)$ on $\gph^x$ and $I_v^y=[c,d)$ on $\gph^y$, then $v$ in the original digraph $\gph$ is identified as a block $[a,b)\times [c,d)\subseteq I^2$. Consequently, the vertices in the digraph are sub-blocks in the unit square.
\item[4)] Then, signals on $\gph$ can be viewed as functions defined on the unit square $[0,1]^2$.
\end{itemize}

In \cite{CMZ:ACHA:18}, once orthonormal bases are built for $\gph^x$ and $\gph^y$, then the tensor product approach is  used to construct orthonormal bases for $\gph$. As we pointed out, the tensor product approach lacks directionality. Since directional systems provide better sparse representations than those by the tensor product ones, in this paper, we  use our  adaptive directional Haar tight framelets in 2D for the digraph signal representations based on the above digraph representations $\gph\leftrightarrow(\gph^x,\gph^y)$.

The contribution of the paper is threefold. First, based on a hierarchical partition,  we provide a simply yet flexible construction of Haar-type tight framelets on any compact set $K\subseteq \R^d$. Second, such framelet systems include directional Haar systems in \cite{HLZ:AML,Li:DHF} as special cases and lead to the adaptive directional Haar tight framelet systems for non-dyadic partitions of the unit block $[0,1]^d$. Last but not least,  we demonstrate that digraph signals can be identified as signals defined on the unit square $[0,1]^2$ and hence could be efficiently represented by the adaptive directional Haar tight framelet systems where the directionality is a really desired property.

The structure of the paper is as follows. In Section~\ref{sec:dirHaar}, we present our main results on the construction of tight frames for $L_2(K)$ for some compact set $K\subseteq \R^d$. Then, adaptive directional Haar tight framelets on bounded domains are deduced. In Section~\ref{sec:digraph}, we show how to represent digraph signals using the developed adaptive directional Haar tight framelets. In Section~\ref{sec:example}, we provide some examples to illustrate our main results. Conclusion and further remarks are given in Section~\ref{sec:remarks}. Some proofs are postponed to the last section.


\section{Adaptive directional Haar tight framelets on bounded domains}
\label{sec:dirHaar}
Let $K\subseteq \R^d$ be a compact set  and consider the Hilbert space $L_2(K):=\{f\setsep \|f\|_2:=(\int_K |f(x)|^2dx)^{\frac12}<\infty\}$ of square-integrable functions $f$ on $K$. The  inner product on $L_2(K)$ is defined by $\la f,g\ra:=\int_K f(x)\overline{g(x)}dx$ for $f,g\in L_2(K)$.  In this section, based on a hierarchical partition of $K$,
 we construct a system $X=\{\varphi\}\cup\{\Psi_j\}_{j\in\N_0}$of elements in $L_2(K)$ and show that it is a tight frame for $L_2(K)$. Such a system $X$ leads to our \emph{adaptive directional Haar tight framelets} (AdaDHF) on $K$.

Before we present and prove our main result in Theorem~\ref{thm:dirHaar}, let us introduce some necessary notation, definitions, and auxiliary results first.  For a Hilbert space $\cH$, the collection $X=\{h_j\}_{j\in\N}\subseteq \cH$  is a \emph{tight frame} for $\cH$ if
\begin{equation}\label{def:tight1}
\|f\|^2 = \sum_{j\in\N}\left|\la f, h_j\ra\right|^2 \quad\forall f\in\cH.
\end{equation}
Using the polarization identity, one can show that it is equivalent to
\begin{equation}\label{def:tight2}
f = \sum_{j\in\N} \la f, h_j\ra h_j\quad \forall f\in\cH.
\end{equation}

We denote $\Id_m$  the identity matrix of size $m\times m$. The matrix $A$ in the following  lemma is used  to connect functions on two scales supported on a same block $B\subseteq K$. Its proof is  postponed to Section~\ref{sec:proofs}.
\begin{lemma}
\label{lem:A}
Let $m\in\N$ and  $b_1$, $\ldots$, $b_m$ be $m$ positive constants such that $\sum_{\ell=1}^m b_\ell =1$.  Let $n={m\choose 2}$ and
$A=(a_{i,\ell})_{0\le i\le n; 1\le \ell \le m}$ be a matrix of size $(n+1) \times m$ of the form:
\begin{equation}\label{matrix:A}
A =
\left[
\begin{matrix}
\sqrt{b_1} & \sqrt{b_2} & \sqrt{b_3} & \cdots & \sqrt{b_{m-1}} & \sqrt{b_{m}}\\
\sqrt{b_2} & -\sqrt{b_1} & 0 & \cdots & 0 & 0\\
\sqrt{b_3} & 0 & -\sqrt{b_1} & \cdots & 0 & 0\\
\vdots & \vdots & \vdots & \ddots & \vdots & \vdots\\
0 & 0 & 0 & \cdots & \sqrt{b_{m}} & -\sqrt{b_{m-1}}\\
\end{matrix}
\right].
\end{equation}
That is, the first row of $A$ (with respect to $i=0$) is
\[
(a_{0,\ell})_{\ell=1}^{m} = \left(\sqrt{b_\ell}\right)_{\ell=1}^{m},
\]
and the  row $(a_{i,\ell})_{\ell=1}^{m}$ for $i\neq 0$ is given by
\[
a_{i,\ell} = a_{{(i_1,i_2)},\ell}=
\begin{cases}
\sqrt{b_{i_2}} & \mbox{if }\ell = i_1,\\
-\sqrt{b_{i_1}} & \mbox{if } \ell = i_2,\\
0 & \mbox{otherwise,}
\end{cases}
\]
where for each $i\neq 0$, the index $i$ is uniquely determined by a pair $(i_1,i_2)$ satisfying $1\le i_1< i_2\le m$ through $(i_1,i_2)\mapsto i=\frac{(2m-i_1)(i_1-1)}{2}+i_2-i_1$.
Then, $A$ satisfies $A^\top A=\Id_m$.
\end{lemma}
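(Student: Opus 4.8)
The plan is to verify the stronger statement that the columns of $A$ are orthonormal, which is precisely the identity $A^\top A=\Id_m$. Writing the $(\ell,\ell')$ entry as $(A^\top A)_{\ell,\ell'}=\sum_{i=0}^n a_{i,\ell}\,a_{i,\ell'}$, I would separate the contribution of the zeroth row ($i=0$) from that of the remaining rows ($i\neq 0$), and treat the diagonal entries ($\ell=\ell'$) and the off-diagonal entries ($\ell\neq\ell'$) in turn.

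For the diagonal entry $(A^\top A)_{\ell,\ell}=\sum_{i=0}^n a_{i,\ell}^2$, the row $i=0$ contributes $(\sqrt{b_\ell})^2=b_\ell$. Among the rows $i\neq 0$, column $\ell$ carries a nonzero entry exactly when $\ell\in\{i_1,i_2\}$: if $\ell=i_1$ (so $i_2$ ranges over all indices larger than $\ell$) the entry is $\sqrt{b_{i_2}}$, and if $\ell=i_2$ (so $i_1$ ranges over all indices smaller than $\ell$) the entry is $-\sqrt{b_{i_1}}$. Squaring and summing gives $\sum_{k>\ell}b_k+\sum_{k<\ell}b_k=\sum_{k\neq\ell}b_k$, which equals $1-b_\ell$ by the normalization $\sum_{\ell=1}^m b_\ell=1$. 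Adding the row-$0$ term $b_\ell$ yields exactly $1$.

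For an off-diagonal entry with $\ell<\ell'$, the row $i=0$ contributes $\sqrt{b_\ell}\sqrt{b_{\ell'}}=\sqrt{b_\ell b_{\ell'}}$. The key observation is that each row $i\neq 0$ has its support confined to the two columns $i_1$ and $i_2$; hence both $a_{i,\ell}$ and $a_{i,\ell'}$ are nonzero only for the unique row with $(i_1,i_2)=(\ell,\ell')$. For that single row the product is $\sqrt{b_{\ell'}}\cdot(-\sqrt{b_\ell})=-\sqrt{b_\ell b_{\ell'}}$, which cancels the row-$0$ contribution, so the off-diagonal entry vanishes. Combining the two cases gives $A^\top A=\Id_m$.

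The computation is essentially an incidence count, and I expect the only points needing care to be the bookkeeping of which pairs $(i_1,i_2)$ place a nonzero entry in a given column --- in particular the role of the normalization $\sum_\ell b_\ell=1$ in closing up the diagonal, and the sign placement ($\sqrt{b_{i_2}}$ in column $i_1$ versus $-\sqrt{b_{i_1}}$ in column $i_2$) that produces the off-diagonal cancellation. The explicit bijection $(i_1,i_2)\mapsto i$ is irrelevant to the argument, since each sum ranges over all rows regardless of their labeling; it serves only to enumerate the $n={m\choose 2}$ rows.
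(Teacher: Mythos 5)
Your proposal is correct and follows essentially the same route as the paper's proof: both verify $A^\top A=\Id_m$ by showing the columns are orthonormal, computing the diagonal entries via $b_\ell+\sum_{k\neq\ell}b_k=1$ and the off-diagonal entries via the cancellation of the row-$0$ term $\sqrt{b_\ell b_{\ell'}}$ against the single surviving row $(i_1,i_2)=(\ell,\ell')$. Your observation that each row $i\neq 0$ is supported on only two columns makes the off-diagonal case slightly cleaner than the paper's explicit four-way split of the sum, but the argument is the same.
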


\medskip

For  a measurable set $B\subseteq \R^d$, we denote  $|B|$ as its  Lebesgue measure and  $\chi_B$ as the characteristic function on $B$. The following lemma shows that we can construct a set of functions supported on $B$ so that it is tight.

\begin{lemma}
\label{lem:Psi_B}
Let $B\subseteq K\subseteq\R^d$ be a measurable subset in the compact set $K$ satisfying $|B|>0$ and   $B_\ell, \ell=1,\ldots, m$ with $m\ge2$ be measurable sub-blocks of $B$ such that
$B =\cup_{\ell=1}^mB_\ell$,  $|B_\ell|>0$ for all $\ell=1,\ldots, m$, and  $|B_{\ell_1}\cap B_{\ell_2}|=0$ for $\ell_1\neq \ell_2$.      Define the set $\Psi_B:=\{\psi^{(\ell_1,\ell_2)}\setsep 1\le \ell_1<\ell_2\le m\}$ of functions  by
 \begin{equation}\label{def:psi_ell}
      \psi^{(\ell_1,\ell_2)}:=\sqrt{b_{\ell_2}}\gamma_{\ell_1}-
      \sqrt{b_{\ell_1}}\gamma_{\ell_2}, \quad 1\le \ell_1<\ell_2 \le m,
 \end{equation}
 where $\gamma_{\ell}:=\frac{\chi_{B_{\ell}}}{\sqrt{|B_{\ell}|}}$  and $b_\ell:=\frac{|B_\ell|}{|B|}$. Then $\Psi_B$ is a tight frame for
 \[
 \cW_B:=\spn\{\psi^{(\ell_1,\ell_2)}\setsep 1\le \ell_1<\ell_2\le m \}.
 \] That is,
 \[
 f = \sum_{1\le \ell_1<\ell_2\le m}\la f, \psi^{(\ell_1,\ell_2)}\ra \psi^{(\ell_1,\ell_2)} \quad \forall f\in \cW_B.
 \]
\end{lemma}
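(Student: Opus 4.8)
The plan is to collapse the statement to the finite-dimensional identity $A^\top A=\Id_m$ furnished by Lemma~\ref{lem:A}, using that the normalized indicators $\gamma_\ell$ are orthonormal. First I would note that because $|B_{\ell_1}\cap B_{\ell_2}|=0$ for $\ell_1\neq\ell_2$ and $|B_\ell|>0$, the functions $\gamma_\ell=\chi_{B_\ell}/\sqrt{|B_\ell|}$ satisfy $\la\gamma_{\ell_1},\gamma_{\ell_2}\ra=\delta_{\ell_1,\ell_2}$, so $\{\gamma_\ell\}_{\ell=1}^m$ is an orthonormal basis of the $m$-dimensional space $\cV_B:=\spn\{\gamma_\ell\setsep 1\le\ell\le m\}$. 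Via the coordinate isometry $\sum_\ell c_\ell\gamma_\ell\leftrightarrow(c_1,\ldots,c_m)^\top$, I would identify $\cV_B$ with $\R^m$ carrying the standard inner product. Under this identification the scaling-type function $\varphi_B:=\chi_B/\sqrt{|B|}=\sum_{\ell=1}^m\sqrt{b_\ell}\,\gamma_\ell$ corresponds exactly to the first row ($i=0$) of $A$, while each $\psi^{(\ell_1,\ell_2)}$ corresponds to the row indexed by the pair $(\ell_1,\ell_2)$. Thus $\{\varphi_B\}\cup\Psi_B$ is precisely the set of rows of $A$, read as vectors in $\R^m\cong\cV_B$.

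Next I would invoke Lemma~\ref{lem:A}. Writing $\mathbf{r}_i\in\R^m$ for the $i$-th row of $A$, the identity $A^\top A=\Id_m$ is equivalent to $\sum_{i=0}^n\mathbf{r}_i\mathbf{r}_i^\top=\Id_m$, which is exactly the statement that the rows of $A$ form a Parseval frame (tight with bound $1$) for $\R^m$. Translating this back through the coordinate isometry, $\{\varphi_B\}\cup\Psi_B$ is a Parseval frame for $\cV_B$, so that
\[
g=\la g,\varphi_B\ra\varphi_B+\sum_{1\le\ell_1<\ell_2\le m}\la g,\psi^{(\ell_1,\ell_2)}\ra\psi^{(\ell_1,\ell_2)}\qquad\forall\,g\in\cV_B.
\]

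The only step that is not automatic is passing from this reconstruction on $\cV_B$ to one on the proper subspace $\cW_B=\spn\Psi_B$, and this is where I expect the main (albeit mild) obstacle to lie, since a Parseval frame for a space need not restrict to one on a subspace. The feature that rescues the argument is an orthogonality: a one-line computation gives $\la\psi^{(\ell_1,\ell_2)},\varphi_B\ra=\sqrt{b_{\ell_2}}\sqrt{b_{\ell_1}}-\sqrt{b_{\ell_1}}\sqrt{b_{\ell_2}}=0$ for every pair, so $\varphi_B\perp\cW_B$ and $\cV_B=\R\varphi_B\oplus\cW_B$ is an orthogonal decomposition. Hence for any $f\in\cW_B$ one has $\la f,\varphi_B\ra=0$, the $\varphi_B$-term in the displayed reconstruction drops out, and we are left with $f=\sum_{\ell_1<\ell_2}\la f,\psi^{(\ell_1,\ell_2)}\ra\psi^{(\ell_1,\ell_2)}$, which is exactly the asserted tight-frame identity for $\cW_B$. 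As a sanity check, $\dim\cW_B=m-1$ whereas $|\Psi_B|=\binom{m}{2}$, so the system is genuinely redundant once $m\ge 3$, consistent with its being a tight frame rather than an orthonormal basis.
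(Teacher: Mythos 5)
Your proof is correct, and it organizes the argument differently from the paper. Both proofs rest on the same two ingredients --- orthonormality of the $\gamma_\ell$ and the identity $A^\top A=\Id_m$ of Lemma~\ref{lem:A} --- but the paper deploys them as a direct computation: by linearity it reduces to reconstructing each generator, and writing $\psi^{(\ell_1,\ell_2)}=\sum_\ell a_{(\ell_1,\ell_2),\ell}\gamma_\ell$ it evaluates the resulting triple sum using the relation $\sum_{\ell_1'<\ell_2'}a_{(\ell_1',\ell_2'),\ell}\,a_{(\ell_1',\ell_2'),\tilde\ell}=\delta_{\ell,\tilde\ell}-\sqrt{b_\ell b_{\tilde\ell}}$, after which the cross terms $\sum_{\tilde\ell}\sqrt{b_{\ell_1}b_{\ell_2}b_{\tilde\ell}}\,\gamma_{\tilde\ell}$ cancel in pairs. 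Your proof packages exactly that cancellation conceptually: $A^\top A=\Id_m$ says the rows of $A$ --- i.e., $\{\varphi_B\}\cup\Psi_B$ under the coordinate isometry --- form a Parseval frame for the $m$-dimensional space $\spn\{\gamma_\ell\setsep 1\le\ell\le m\}$, and the term discarded when passing to $\cW_B$ is precisely $\la f,\varphi_B\ra\varphi_B$, which vanishes for $f\in\cW_B$ by your one-line orthogonality check. This is in substance the splitting $\spn\{\gamma_\ell\}=\spn\{\varphi_B\}\oplus\cW_B$ that the paper records separately as Lemma~\ref{lem:VW}, so your argument in effect merges Lemma~\ref{lem:VW} and Lemma~\ref{lem:Psi_B} into a single frame-theoretic statement. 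What each buys: the paper's index computation is self-contained at the level of the generators and never needs to invoke $\varphi_B$ or the ambient space; yours is shorter, explains \emph{why} the cross terms cancel, and isolates a reusable principle (a Parseval frame, one of whose elements is orthogonal to a subspace containing the span of the others, restricts to a Parseval frame of that subspace). Two minor remarks: your $\cV_B$ clashes with the paper's notation (in Lemma~\ref{lem:VW} the paper writes $\cV_B=\spn\{\chi_B\}$, and your space is its $\cV_1$), and your claimed equality $\cV_B=\R\varphi_B\oplus\cW_B$ (equivalently $\dim\cW_B=m-1$) is true but not actually proven in your write-up --- fortunately only the orthogonality $\varphi_B\perp\cW_B$ enters the argument, so this leaves no gap.
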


\medskip

The proof of Lemma~\ref{lem:Psi_B} uses results in Lemma~\ref{lem:A} and it is one of the key steps in our proof of the main theorem. We postpone it to Section~\ref{sec:proofs}.

Note that $\psi^{(\ell_1,\ell_2)}$ in Lemma~\ref{lem:Psi_B} are constructed from $\chi_{B_\ell}$, $\ell=1,\ldots,m$. The following lemma demonstrates that those $\chi_{B_\ell}$'s can be constructed  from $\psi^{(\ell_1,\ell_2)}$'s  together with $\chi_B$ as well.

\begin{lemma}
\label{lem:VW}
Let $B$, $B_\ell, \gamma_\ell, \ell=1,\ldots, m$, and $\Psi_B:=\{\psi^{(\ell_1,\ell_2)}, 1\le \ell_1<\ell_2\le m\}$ be defined as  in Lemma~\ref{lem:Psi_B}. Define vectors $\Gamma_B$, $\Phi_B$ of functions by
\[
\Gamma_{B}:=(\gamma_\ell)_{\ell=1}^{m}\mbox{ and }\Phi_{B}:=(\gamma_B,\Psi_{B})=\left(\gamma_{B},\psi_1,\ldots,\psi_n\right),
\]
where $\gamma_B:=\frac{\chi_B}{\sqrt{|B|}}$ and $\psi_1,\ldots, \psi_n$ are from enumerating the elements in $\Psi_{B}$   through $(\ell_1,\ell_2)\mapsto \frac{(2m-\ell_1)(\ell_1-1)}{2}+(\ell_2-\ell_1)$ with $n = {m \choose 2}=\frac{m\times (m-1)}{2}$. Then
\[
\Gamma_B = A^\top \Phi_B.
\]
Consequently, the space $\cV_1:=\spn\{\gamma_\ell\setsep \ell=1,\ldots,m\}=\cV_B\oplus \cW_B$ where $\cV_B:=\spn\{\chi_B\}$ and $\cW_B:=\spn\{\psi\setsep \psi\in\Psi_B\}$.
\end{lemma}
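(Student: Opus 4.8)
The plan is to establish the matrix identity $\Gamma_B=A^\top\Phi_B$ by a component-by-component verification, and then to read off the decomposition $\cV_1=\cV_B\oplus\cW_B$ as a direct corollary together with a short orthogonality check. Writing out the $\ell$-th component of $A^\top\Phi_B$ using the explicit structure of $A$ from Lemma~\ref{lem:A}, the row $i=0$ contributes $a_{0,\ell}\,\gamma_B=\sqrt{b_\ell}\,\gamma_B$, while a row indexed by a pair $(i_1,i_2)$ contributes only when $\ell\in\{i_1,i_2\}$: the entry $a_{(i_1,i_2),\ell}$ equals $\sqrt{b_{i_2}}$ when $\ell=i_1$ and $-\sqrt{b_{i_1}}$ when $\ell=i_2$. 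Hence the task reduces to proving, for each $\ell=1,\ldots,m$,
\[
\sqrt{b_\ell}\,\gamma_B + \sum_{i_2>\ell}\sqrt{b_{i_2}}\,\psi^{(\ell,i_2)} - \sum_{i_1<\ell}\sqrt{b_{i_1}}\,\psi^{(i_1,\ell)} = \gamma_\ell .
\]

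First I would substitute the definition $\psi^{(\ell_1,\ell_2)}=\sqrt{b_{\ell_2}}\gamma_{\ell_1}-\sqrt{b_{\ell_1}}\gamma_{\ell_2}$ into the two sums: each $\sqrt{b_{i_2}}\,\psi^{(\ell,i_2)}$ expands to $b_{i_2}\gamma_\ell-\sqrt{b_\ell b_{i_2}}\,\gamma_{i_2}$, and each $-\sqrt{b_{i_1}}\,\psi^{(i_1,\ell)}$ expands to $b_{i_1}\gamma_\ell-\sqrt{b_\ell b_{i_1}}\,\gamma_{i_1}$. Collecting the coefficient of $\gamma_\ell$ gives $\sum_{k\neq\ell}b_k=1-b_\ell$ (using $\sum_{k=1}^m b_k=1$), and the remaining terms collect to $-\sqrt{b_\ell}\sum_{k\neq\ell}\sqrt{b_k}\,\gamma_k$. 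The key remaining ingredient is the partition identity $\chi_B=\sum_{\ell=1}^m\chi_{B_\ell}$ almost everywhere (valid since the $B_\ell$ are essentially disjoint and cover $B$), which rewrites as $\gamma_B=\sum_{k=1}^m\sqrt{b_k}\,\gamma_k$ and hence $\sqrt{b_\ell}\,\gamma_B=b_\ell\gamma_\ell+\sqrt{b_\ell}\sum_{k\neq\ell}\sqrt{b_k}\,\gamma_k$. Adding the two pieces, the cross terms $\pm\sqrt{b_\ell}\sum_{k\neq\ell}\sqrt{b_k}\,\gamma_k$ cancel and the coefficient of $\gamma_\ell$ becomes $b_\ell+(1-b_\ell)=1$, leaving exactly $\gamma_\ell$. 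This proves $\Gamma_B=A^\top\Phi_B$.

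For the decomposition $\cV_1=\cV_B\oplus\cW_B$ I would argue both inclusions and then orthogonality. Since $\chi_B=\sum_\ell\chi_{B_\ell}$ and each $\psi^{(\ell_1,\ell_2)}$ is a linear combination of $\gamma_{\ell_1},\gamma_{\ell_2}$, the inclusion $\cV_B+\cW_B\subseteq\cV_1$ is immediate from the definitions; conversely, the identity just proved exhibits every $\gamma_\ell$ as a linear combination of $\gamma_B\in\cV_B$ and the $\psi_i\in\cW_B$, giving $\cV_1\subseteq\cV_B+\cW_B$, so $\cV_1=\cV_B+\cW_B$. To upgrade to an orthogonal direct sum I would compute $\la\psi^{(\ell_1,\ell_2)},\chi_B\ra=\int_B\psi^{(\ell_1,\ell_2)}=\sqrt{b_{\ell_2}}\sqrt{|B_{\ell_1}|}-\sqrt{b_{\ell_1}}\sqrt{|B_{\ell_2}|}=0$, using $\int\gamma_\ell=\sqrt{|B_\ell|}$ and $b_\ell=|B_\ell|/|B|$. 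Thus every generator of $\cW_B$ is orthogonal to $\chi_B$, so $\cW_B\perp\cV_B$, which forces $\cV_B\cap\cW_B=\{0\}$ and makes the sum direct.

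The computations are all elementary; the only genuine bookkeeping obstacle is organizing the two one-sided sums over $i_2>\ell$ and $i_1<\ell$ — which together range over all $k\neq\ell$ — and tracking the signs induced by the asymmetric definition of $\psi^{(\ell_1,\ell_2)}$, so that the cross terms cancel cleanly. I expect the cleanest route is to record the two auxiliary identities separately, namely the coefficient sum $\sum_{k\neq\ell}b_k=1-b_\ell$ and the partition identity $\gamma_B=\sum_k\sqrt{b_k}\,\gamma_k$, before assembling them, rather than manipulating all terms at once.
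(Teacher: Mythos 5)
Your proof is correct, but it takes a different route from the paper's. The paper goes in the opposite direction: it first observes that $\Phi_B = A\Gamma_B$, which is essentially definitional --- each $\psi^{(\ell_1,\ell_2)}$ is \emph{defined} as the row-$(\ell_1,\ell_2)$ combination of the $\gamma_\ell$'s, and the partition identity $\gamma_B=\sum_\ell \sqrt{b_\ell}\,\gamma_\ell$ is exactly the first row --- and then left-multiplies by $A^\top$, invoking Lemma~\ref{lem:A} ($A^\top A=\Id_m$) to conclude $\Gamma_B = A^\top A\Gamma_B = A^\top\Phi_B$ in one line. You instead verify $\Gamma_B=A^\top\Phi_B$ entrywise, never using $A^\top A=\Id_m$; your cancellation of the cross terms $\pm\sqrt{b_\ell}\sum_{k\neq\ell}\sqrt{b_k}\,\gamma_k$ is, in effect, an inline re-derivation of the relevant rows of $A^\top A$ specialized to this situation. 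The paper's argument is shorter and reuses machinery already proved; yours is self-contained, makes the cancellation mechanism explicit, and has the additional merit of actually computing the orthogonality $\la\psi^{(\ell_1,\ell_2)},\chi_B\ra=0$, which the paper's proof of this lemma asserts without calculation (it is only verified later, in step 2 of the proof of Theorem~\ref{thm:dirHaar}). Both establish the same identity and the same direct-sum decomposition; the trade-off is brevity versus transparency.
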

\begin{proof}
Note that $\Gamma_{B}$ is a vector of size $m$ while $\Phi_{B}$ is a vector of size $n={m\choose 2}+1$. From the definition of $\psi^{(\ell_1,\ell_2)}$ in \eqref{def:psi_ell} and $B=\cup_{\ell}B_\ell$, it is easy to verify that
\[
\Phi_{B} = A\Gamma_{B},
\]
where $A=(a_{i,\ell})_{0\le i\le n; 1\le \ell \le m}$ is a matrix of size $(n+1) \times m$  defined as in Lemma~\ref{lem:A}.
By Lemma~\ref{lem:A}, we have $A^\top A=\Id_{m}$, which implies that $\Gamma_{B} = A^\top \Phi_{B}$. Hence, $\cV_1\subseteq \cV_B+\cW_B$.  Now the fact that $\cV_1=\cV_B\oplus \cW_B$ follows directly from $\cV_B\subseteq \cV_1, \cW_B\subseteq \cV_1$ and $\cV_B\perp \cW_B$. This completes the proof.  $\blacksquare$
\end{proof}

\medskip

By splitting the compact set $K$, one can obtain subsets $B_\ell$ of $K$. For each subset $B_\ell$, one can further refine it to have smaller subsets. Such a process could continue and one could obtain a hierarchical partition of $K$.  We say that the sequence $\{\cB_j\}_{j\in\N_0}$ is a \emph{hierarchical partition} of $K$ if it satisfies the following conditions:
\begin{itemize}
  \item[{\rm a)}] \emph{Root property}: each $\cB_j$ is a collection of finite number of measurable subsets of $K$ with $\cB_0=\{K\}$, $\cup_{B\in\cB_j} B = K$,  $|B|>0$ for all $B\in \cB_j$, and $|B_1\cap B_2|=0$ for any $B_1\neq B_2$  in $\cB_j$.

  \item[{\rm b)}] \emph{Nested property}: $\{\cB_j\}_{j\in\N_0}$ is \emph{nested} in the  sense that for each $B\in \cB_{j-1}$,  $B = \cup_{\ell=1}^{c_B} B_\ell$ with $B_\ell\in \cB_j$. That is $B_\ell$'s are children of $B$ in $\cB_j$ and the positive integer $c_B\ge1$ denotes the number of children of $B$ in $\cB_{j}$. In other words,   the sets in $\cB_j$ are obtained from the splitting of sets in $\cB_{j-1}$.

  \item[{\rm c)}] \emph{Density  property}: $\lim_{j\rightarrow\infty}\diam(\cB_j)=0$ where  $\diam(\cB_j):=\max\{\diam(B)\setsep B\in\cB_j\}$ and $\diam(B):=\sup\{|x-y|\setsep x,y\in B\}$ is the diameter of the set $B$.
\end{itemize}

We are now ready to introduce and prove our main result.
\begin{theorem}\label{thm:dirHaar}
Let $K\subseteq \R^d$ be a compact set in $\R^d$ with $|K|>0$ and $\{\cB_j\}_{j\in\N_0}$ be a hierarchical partition of $K$.
Define the set
\[
X(\{\cB_j\}_{j\in\N_0}):=\{\varphi_0\}\cup \{\Psi_{j,B}\setsep B\in\cB_j\}_{j\in\N_0}
\]
of functions by $\varphi_0 := \frac{\chi_{K}}{\sqrt{|K|}}$
and  $\Psi_{j,B}:=\{\psi_{j,B}^{(\ell_1,\ell_2)}:  1\le \ell_1<\ell_2\le c_B\}_{j=0}^\infty$ with
      \begin{equation}\label{def:psijb}
      \psi_{j,B}^{(\ell_1,\ell_2)}:=\sqrt{b_{\ell_2}}\gamma_{\ell_1}-
      \sqrt{b_{\ell_1}}\gamma_{\ell_2}, \quad 1\le \ell_1<\ell_2 \le c_B,
      \end{equation}
 where  $B_\ell\in\cB_{j+1}$, $\ell=1,\ldots, c_B$ are the children sub-blocks of $B$,
 $\gamma_{\ell}:=\frac{\chi_{B_{\ell}}}{\sqrt{|B_{\ell}|}}$,  and $b_\ell:=\frac{|B_\ell|}{|B|}$.
Then, $X(\{\cB_j\}_{j\in\N_0})$ is a tight frame for $L_2(K)$.
\end{theorem}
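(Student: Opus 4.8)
The plan is to realize $X(\{\cB_j\}_{j\in\N_0})$ as the framelet system of a multiresolution-type ladder of approximation spaces and to verify the tight frame identity \eqref{def:tight1} by combining an orthogonal (Pythagorean) decomposition of those spaces with the block-wise tightness already supplied by Lemma~\ref{lem:Psi_B}. First I would introduce, for each level $j$, the \emph{approximation space}
\[
V_j := \spn\{\gamma_B \setsep B\in\cB_j\},\qquad \gamma_B:=\frac{\chi_B}{\sqrt{|B|}},
\]
and note that, by the root property, the functions $\gamma_B$, $B\in\cB_j$, form an orthonormal set (distinct blocks overlap in measure zero and each $\gamma_B$ has unit norm). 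In particular $V_0=\spn\{\varphi_0\}$ since $\cB_0=\{K\}$. The nested property gives $\chi_B=\sum_{\ell}\chi_{B_\ell}$ for the children $B_\ell\in\cB_j$ of any $B\in\cB_{j-1}$, so $\gamma_B\in V_j$ and hence $V_{j-1}\subseteq V_j$.

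Next I would run Lemma~\ref{lem:VW} block by block. For a fixed $B\in\cB_j$ with children $B_1,\dots,B_{c_B}\in\cB_{j+1}$, Lemma~\ref{lem:VW} yields the orthogonal splitting $\spn\{\gamma_{B_\ell}\}=\cV_B\oplus\cW_B$ with $\cV_B=\spn\{\gamma_B\}$ and $\cW_B=\spn\{\psi_{j,B}^{(\ell_1,\ell_2)}\}$. Since every set in $\cB_{j+1}$ has a unique parent in $\cB_j$ and sets with different parents have essentially disjoint supports, summing over all $B\in\cB_j$ produces the orthogonal decomposition
\[
V_{j+1}=V_j\oplus W_j,\qquad W_j:=\spn\{\psi_{j,B}^{(\ell_1,\ell_2)}\setsep B\in\cB_j,\ 1\le\ell_1<\ell_2\le c_B\}.
\]
Because $W_{j'}\subseteq V_{j'+1}\subseteq V_j$ whenever $j'<j$ while $W_j\perp V_j$, the detail spaces $\{W_j\}$ are mutually orthogonal and each is orthogonal to $V_0=\spn\{\varphi_0\}$; telescoping then gives $V_J=\spn\{\varphi_0\}\oplus\bigoplus_{j=0}^{J-1}W_j$ for every $J$. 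Within each $W_j$ the block-wise tightness of Lemma~\ref{lem:Psi_B}, together with the orthogonality of the $\cW_B$, shows that $\bigcup_{B\in\cB_j}\Psi_{j,B}$ is a tight frame for $W_j$, so $\|Q_jf\|^2=\sum_{B\in\cB_j}\sum_{\ell_1<\ell_2}|\la f,\psi_{j,B}^{(\ell_1,\ell_2)}\ra|^2$ for the orthogonal projection $Q_j$ onto $W_j$ (here I use $\la f,\psi\ra=\la Q_jf,\psi\ra$ for $\psi\in W_j$). Applying the Pythagorean theorem to the orthogonal projection $P_J$ of $f$ onto $V_J$ then yields
\[
\|P_Jf\|^2=|\la f,\varphi_0\ra|^2+\sum_{j=0}^{J-1}\sum_{B\in\cB_j}\sum_{1\le\ell_1<\ell_2\le c_B}\left|\la f,\psi_{j,B}^{(\ell_1,\ell_2)}\ra\right|^2.
\]

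It remains to pass to the limit $J\to\infty$, and this is where the density property enters and is, in my view, the genuine analytic point; the rest is bookkeeping assembled from Lemmas~\ref{lem:Psi_B} and \ref{lem:VW}. I would establish $\overline{\bigcup_{J}V_J}=L_2(K)$ as follows: given $f\in L_2(K)$ and $\varepsilon>0$, choose $g$ continuous on $K$ with $\|f-g\|_2<\varepsilon$; then the orthogonal projection $P_Jg=\sum_{B\in\cB_J}\big(\tfrac{1}{|B|}\int_B g\big)\chi_B$ is the block-average of $g$, and for $x\in B\in\cB_J$ one has $|P_Jg(x)-g(x)|\le\omega_g(\diam(\cB_J))$, where $\omega_g$ is the modulus of continuity of the uniformly continuous function $g$. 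Since $\diam(\cB_J)\to0$ by the density property, $P_Jg\to g$ uniformly, hence in $L_2(K)$, giving $\mathrm{dist}(f,V_J)\le\|f-g\|_2+\|g-P_Jg\|_2<2\varepsilon$ for $J$ large. Thus the increasing closed subspaces $V_J$ have dense union, so $P_Jf\to f$ and $\|P_Jf\|\to\|f\|$. Letting $J\to\infty$ in the displayed identity recovers \eqref{def:tight1}, which is exactly the tight frame property of $X(\{\cB_j\}_{j\in\N_0})$. The hard part is precisely this density step, where the geometric hypothesis $\diam(\cB_j)\to0$ is indispensable and is what upgrades the finite-level Parseval identity into the infinite tight frame expansion.
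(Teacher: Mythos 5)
Your proof is correct and follows essentially the same route as the paper's: the same ladder of block-span spaces $\cV_j$, the orthogonal splitting $\cV_{j+1}=\cV_j\oplus\cW_j$ obtained block-by-block from Lemma~\ref{lem:VW}, block-wise tightness from Lemma~\ref{lem:Psi_B}, and the density property to pass from the finite-level Parseval identity to the full tight frame identity. The only differences are presentational: you work with the orthogonal projections $P_J$ and $Q_j$ and a monotone limit where the paper expands $f$ as a series in the orthogonal decomposition $\cV_0\oplus\bigoplus_{j,B}\cW_{j,B}$, and you supply an explicit argument (continuous approximation plus the modulus of continuity applied to block averages) for the density claim $\overline{\cup_j\cV_j}=L_2(K)$, which the paper asserts from the density property without proof.
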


\begin{proof}
By \eqref{def:tight1}, we need to prove that
\[
\begin{aligned}
\|f\|_2^2 &
=
|\la f, \varphi_0\ra|^2+\sum_{j=0}^\infty\sum_{B\in\cB_j}\sum_{1\le\ell_1<\ell_2\le c_B}\left |\la f, \psi^{(\ell_1,\ell_2)}_{j,B}\ra\right|^2\quad \forall f\in L_2(K).
\end{aligned}
\]
We proceed  through the following steps.

\begin{enumerate}
\item[1)] First, let $\cV_0:=\spn\{\chi_K\}=\spn\{\varphi_0\}$ and
\begin{equation}\label{def:Vj}
\cV_j:=\spn\{\chi_{B} \setsep B\in \cB_{j}\}
\end{equation}
for $j\in\N_0$. Then by the nested property of $\{\cB_j\}_{j\in\N_0}$, we have
\[
\cV_0\subseteq \cV_1\subseteq \cdots \subseteq \cV_j\subseteq \cV_{j+1}\subseteq \cdots.
\]
By the density property of  $\{\cB_j\}_{j\in\N_0}$, we see that $\cup_{j\in\N_0}\cV_j$ is dense in $L_2(K)$.

\item[2)] Let
\begin{equation}\label{def:Wj}
\cW_j:=\spn\{\psi: \psi\in \Psi_{j,B}, B\in\cB_j\}
\end{equation}
and
\[
\cW_{j,B}:=\spn\{\psi \setsep \psi\in\Psi_{j,B}\}, \quad j\in\N_0.
\]
Thanks to the nested property of $\{\cB_j\}_{j\in\N_0}$ and our construction of $\psi_{j,B}^{(\ell_1,\ell_2)}$, we have that for any $B\in \cB_j$
\[
\la \chi_B, \psi^{(\ell_1,\ell_2)}_{j,B}\ra = 0, \quad 1\le \ell_1<\ell_2\le c_B.
\]
Hence, we see that $\cV_j\perp \cW_j$ and $\cW_j\perp \cW_{j'}$ for all $j,j'\in\N_0$ and $j\neq j'$. Moreover, we claim that
\[
\cV_{j+1} = \cV_j \oplus \cW_j\quad \forall j\in\N_0.
\]
Obviously,  $\cV_j\subseteq \cV_{j+1}$ and $\cW_j\subseteq \cV_{j+1}$. Hence, we only need to show that $\cV_{j+1}\subseteq (\cV_j +\cW_j)$, which by the nested property and noticing $\cW_j=\oplus_{B\in\cB_j}\cW_{j,B}$ for all $j\in\N_0$, it suffices to show that for each $B\in \cB_j$, functions in
\[
\{\chi_{B_\ell}\setsep B_\ell\in\cB_{j+1} \mbox{ are children of } B  \}\subseteq \cV_{j+1}
\]
are the linear combinations of functions in
\[
\{\chi_B\}\cup\{\psi_{j,B}^{(\ell_1,\ell_2)} \setsep 1\le \ell_1<\ell_2\le c_B\} \subseteq (\cV_j+\cW_j),
\]
which follows from Lemma~\ref{lem:VW}.
Therefore $\cV_{j+1} = \cV_j\oplus \cW_j$ for all $j\in\N_0$.

\item[3)] Consequently, $\cV_0\oplus\bigoplus_{j\in\N_0,B\in\cB_j}\cW_{j,B}$ is dense in $L_2(K)$. Hence,  for each $f\in L_2(K)$, there exists a sequence  $\{c_{\varphi_0}\}\cup\{c_{j,B}^{(\ell_1,\ell_2)}\setsep B\in\cB_j, 1\le\ell_1,\ell_2\le c_B \}_{j\in\N_0}$ of constants such that
\[
f=c_{\varphi_0} \varphi_0 +\sum_{j=0}^\infty\sum_{B\in \cB_j} \sum_{1\le\ell_1<\ell_2\le c_B}c_{j,B}^{(\ell_1,\ell_2)} \psi^{(\ell_1,\ell_2)}_{j,B},
\]
where the equality holds in the $L_2$-sense. Define
\[
f_{j,B}:= \sum_{1\le\ell_1<\ell_2\le c_B}c_{j,B}^{(\ell_1,\ell_2)} \psi^{(\ell_1,\ell_2)}_{j,B}\in \cW_{j,B}.
\]
Then $f=c_{\varphi_0}\varphi_0+\sum_{j=0}^\infty\sum_{B\in\cB^j} f_{j,B}$ and we have
\[
\begin{aligned}
\|f\|_2^2 &= \la f,f\ra =
|c_{\varphi_0}|^2 +\sum_{j=0}^\infty\sum_{B\in \cB_j} \|f_{j,B}\|_2^2,
\end{aligned}
\]
where the series converges absolutely. On the other hand, we have
\[
\begin{aligned}
\sum_{h\in X(\{\cB_j\}_{j\in\N_0}} |\la f, h\ra|^2
&=|\la f,\varphi_0\ra|^2+\sum_{j=0}^\infty\sum_{B\in \cB_j} \sum_{1\le\ell_1<\ell_2\le c_B} \left|\la f, \psi^{(\ell_1,\ell_2)}_{j,B}\ra\right|^2\\
&=|\la f,\varphi_0\ra|^2+\sum_{j=0}^\infty\sum_{B\in \cB_j} \sum_{1\le\ell_1<\ell_2\le c_B} \left|\la f_{j,B}, \psi^{(\ell_1,\ell_2)}_{j,B}\ra\right|^2.
\end{aligned}
\]
Hence, to prove that $\|f\|_2^2 = \sum_{h\in X(\{\cB_j\}_{j\in\N_0})}\left|\la f, h\ra\right|^2$, it suffices to show that for each $j\in\N_0$ and $B\in \cB_j$, we have
\[
\|f_{j,B}\|_2^2=\sum_{1\le\ell_1<\ell_2\le c_B} \left|\la f_{j,B}, \psi^{(\ell_1,\ell_2)}_{j,B}\ra\right|^2.
\]
This is equivalent to showing that $\Psi_{j,B}=\{\psi_{j,B}^{(\ell_1,\ell_2)}\setsep 1\le \ell_1,\ell_2\le c_B\}$ is a tight frame for $\cW_{j,B}$, which  follows from Lemma~\ref{lem:Psi_B}.
\end{enumerate}

Consequently, we prove that $X(\{\cB_j\}_{j\in\N_0})$ is a tight frame for $L_2(K)$.  $\blacksquare$
\end{proof}

\medskip

The system $X(\{\cB_j\}_{j\in\N_0})$ in Theorem~\ref{thm:dirHaar}, which depends only on the hierarchical partition of $K$,  is very flexible. Next, we discuss some of its special cases.

In practice, signals usually lie in finite-dimensional spaces. Hence, it is useful to study  the cut-off system $X(\{\cB_j\}_{j=0}^J)$ up to some scale $J\in\N_0$.

\begin{corollary}
\label{cor:cut-off} Retaining all assumptions and notation in Theorem~\ref{thm:dirHaar}. Given $J\in\N_0$, define the cut-off system $X(\{\cB_j\}_{j=0}^J)$ by
\begin{equation}
\label{def:cut-off}
X(\{\cB_j\}_{j=0}^J):=\{\varphi_0\}\cup\{\Psi_{j,B}\setsep B\in\cB_j\}_{j=0}^{J-1}.
\end{equation}
Then, $X(\{\cB_j\}_{j=0}^J)$ is a tight frame for $\cV_{J}$ defined as in \eqref{def:Vj}.
\end{corollary}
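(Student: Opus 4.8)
The plan is to reuse the orthogonal multiresolution structure already built inside the proof of Theorem~\ref{thm:dirHaar}, but to stop the telescope at the finite scale $J$ rather than passing to the limit. The facts I would carry over are the chain of inclusions $\cV_0\subseteq\cV_1\subseteq\cdots$, the orthogonal splitting $\cV_{j+1}=\cV_j\oplus\cW_j$ for every $j\in\N_0$, the further decomposition $\cW_j=\bigoplus_{B\in\cB_j}\cW_{j,B}$, and the mutual orthogonality relations $\cV_j\perp\cW_j$ and $\cW_j\perp\cW_{j'}$ for $j\neq j'$. Iterating the splitting from $j=0$ up to $j=J-1$ produces the \emph{finite} orthogonal direct sum
\[
\cV_J=\cV_0\oplus\bigoplus_{j=0}^{J-1}\bigoplus_{B\in\cB_j}\cW_{j,B}.
\]
Crucially, this identity is \emph{exact}: there is no remainder term and hence no density or limiting argument is needed, which is the one place where the present statement differs from Theorem~\ref{thm:dirHaar}.

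Next I would verify that the truncated generating set lands in the right space and then decompose an arbitrary $f\in\cV_J$ accordingly. Observe that $X(\{\cB_j\}_{j=0}^J)=\{\varphi_0\}\cup\bigcup_{j=0}^{J-1}\bigcup_{B\in\cB_j}\Psi_{j,B}$, where $\varphi_0\in\cV_0\subseteq\cV_J$ and each $\psi_{j,B}^{(\ell_1,\ell_2)}\in\cW_{j,B}\subseteq\cV_{j+1}\subseteq\cV_J$ for $j\le J-1$, so every element of the system lies in $\cV_J$ and it is meaningful to ask whether it is a tight frame for $\cV_J$. Writing the orthogonal decomposition $f=f_0+\sum_{j=0}^{J-1}\sum_{B\in\cB_j}f_{j,B}$ with $f_0\in\cV_0$ and $f_{j,B}\in\cW_{j,B}$, orthogonality immediately gives the Pythagorean identity $\|f\|_2^2=\|f_0\|_2^2+\sum_{j=0}^{J-1}\sum_{B\in\cB_j}\|f_{j,B}\|_2^2$, a finite sum with no convergence subtleties.

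It then remains to match each squared norm against the corresponding frame coefficients. Since $\|\varphi_0\|_2=1$, the singleton $\{\varphi_0\}$ is a tight frame (indeed an orthonormal basis) for $\cV_0$, so $\|f_0\|_2^2=|\la f_0,\varphi_0\ra|^2$; and by Lemma~\ref{lem:Psi_B}, $\Psi_{j,B}$ is a tight frame for $\cW_{j,B}$, so $\|f_{j,B}\|_2^2=\sum_{1\le\ell_1<\ell_2\le c_B}|\la f_{j,B},\psi_{j,B}^{(\ell_1,\ell_2)}\ra|^2$. Using the pairwise orthogonality of the summands once more, every component of $f$ other than $f_0$ is orthogonal to $\varphi_0$ and every component other than $f_{j,B}$ is orthogonal to each $\psi_{j,B}^{(\ell_1,\ell_2)}$, so I may replace $\la f_0,\varphi_0\ra$ by $\la f,\varphi_0\ra$ and $\la f_{j,B},\psi_{j,B}^{(\ell_1,\ell_2)}\ra$ by $\la f,\psi_{j,B}^{(\ell_1,\ell_2)}\ra$. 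Substituting these into the Pythagorean identity yields exactly $\|f\|_2^2=\sum_{h\in X(\{\cB_j\}_{j=0}^J)}|\la f,h\ra|^2$ for all $f\in\cV_J$, which is the tight-frame identity \eqref{def:tight1}.

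As for difficulty, there is no genuine obstacle: the entire content is already contained in steps 1)--3) of the proof of Theorem~\ref{thm:dirHaar}, and the corollary is merely the finite-level specialization obtained by truncating the telescope and replacing the density/limit step with the exact equality $\cV_J=\cV_0\oplus\bigoplus_{j<J}\cW_j$. The only point requiring a little care is the bookkeeping guaranteeing that no component of $f$ is left uncounted and that all cross terms vanish, both of which follow mechanically from the orthogonality of $\cV_0$ and the spaces $\cW_{j,B}$.
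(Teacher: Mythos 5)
Your proof is correct and follows essentially the same route as the paper: the paper's own argument simply notes the exact finite decomposition $\cV_J=\cV_0\oplus\bigoplus_{j=0}^{J-1}\cW_j$ and then invokes the argument of Theorem~\ref{thm:dirHaar} restricted to $\cV_J$, which is precisely what you have spelled out (Pythagorean identity on the finite orthogonal sum, tightness of $\{\varphi_0\}$ on $\cV_0$ and of $\Psi_{j,B}$ on $\cW_{j,B}$ via Lemma~\ref{lem:Psi_B}, and orthogonality to pass from component coefficients to coefficients of $f$). Your write-up just makes explicit the bookkeeping the paper leaves implicit with ``follows similarly.''
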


\begin{proof}
Note that
$\cV_{J}=\cV_0\oplus\bigoplus_{j=0}^ {J-1} \cW_{j}$. The conclusion follows similarly to the proof of Theorem~\ref{thm:dirHaar} by showing that $\|f\|_2^2=\sum_{h\in X(\{\cB_j\}_{j=0}^J)}|\la f, h\ra|^2$ for all $f\in \cV_J$.
\end{proof}

We immediately have the following corollary if each splitting of a block $B$ has at most two children (sub-blocks).

\begin{corollary}
\label{cor:ONB} Retaining all assumptions in Theorem~\ref{thm:dirHaar}. In addition, if $c_B\le 2$ for each $B\in \cB_j$ and $j\in\N_0$, that is, the number of  children of each block $B$ is at most $2$, then $X(\{\cB_j\}_{j\in\N_0})$ is an orthonormal basis for $L_2(K)$.
\end{corollary}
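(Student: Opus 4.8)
The plan is to leverage Theorem~\ref{thm:dirHaar}, which already establishes that $X:=X(\{\cB_j\}_{j\in\N_0})$ is a tight frame for $L_2(K)$ with frame constant $1$ (i.e. a Parseval frame), so that completeness is automatic. Recalling the definition of an orthonormal basis given in the introduction, it then suffices to upgrade this Parseval frame to an orthonormal system, and the key observation I would use is the standard fact that a Parseval frame all of whose elements are unit vectors is necessarily orthonormal. Thus the proof reduces to two tasks: verifying that every element of $X$ has unit norm under the hypothesis $c_B\le 2$, and invoking (or proving inline) this Parseval-to-orthonormal fact.

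For the norm computation, first note $\|\varphi_0\|_2^2=\int_K|K|^{-1}\,dx=1$. For the framelets, the hypothesis $c_B\le 2$ is exactly what trivializes the directional index set: when $c_B=1$ there is no pair $1\le\ell_1<\ell_2\le c_B$, so $\Psi_{j,B}=\emptyset$ and no framelet is contributed; when $c_B=2$ there is a single generator $\psi_{j,B}^{(1,2)}=\sqrt{b_2}\,\gamma_1-\sqrt{b_1}\,\gamma_2$. Since the children $B_1,B_2$ have supports that are disjoint up to a set of measure zero, we have $\gamma_1\perp\gamma_2$ and $\|\gamma_\ell\|_2=1$, so
\[
\|\psi_{j,B}^{(1,2)}\|_2^2=b_2\|\gamma_1\|_2^2+b_1\|\gamma_2\|_2^2=b_1+b_2=\frac{|B_1|+|B_2|}{|B|}=1,
\]
using $B=B_1\cup B_2$. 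Hence every element of $X$ is a unit vector.

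Finally, to convert the Parseval property together with the unit-norm property into orthonormality, I would apply the frame identity \eqref{def:tight1} to $f=h_k$ for a fixed element $h_k\in X$: isolating the term $h=h_k$ gives $1=\|h_k\|_2^2=\sum_{h\in X}|\la h_k,h\ra|^2=\|h_k\|_2^4+\sum_{h\neq h_k}|\la h_k,h\ra|^2=1+\sum_{h\neq h_k}|\la h_k,h\ra|^2$, which forces $\la h_k,h\ra=0$ for all $h\neq h_k$. Combined with the completeness supplied by Theorem~\ref{thm:dirHaar}, this shows that $X$ is an orthonormal basis for $L_2(K)$. I expect no serious obstacle here: the content lies entirely in the short identity $b_1+b_2=1$ and in recognizing the Parseval-frame characterization of orthonormal bases. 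The only point requiring a little care is the degenerate case $c_B=1$, which must be read as contributing no framelet rather than an ill-defined one, and the implicit observation that the single-generator case is the \emph{only} way the general construction can produce unit-norm framelets, since for $m=c_B\ge 3$ one computes $\|\psi_{j,B}^{(\ell_1,\ell_2)}\|_2^2=b_{\ell_1}+b_{\ell_2}<1$ in general.
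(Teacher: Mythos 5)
Your proposal is correct, and it reaches the conclusion by a genuinely different mechanism than the paper. The paper's proof re-enters the proof of Theorem~\ref{thm:dirHaar}: it recalls the orthogonal decomposition $L_2(K)=\overline{\cV_0\oplus\bigoplus_{j\in\N_0,B\in\cB_j}\cW_{j,B}}$, observes that under $c_B\le 2$ each $\cW_{j,B}$ is at most one-dimensional and spanned by the single unit-norm generator $\psi_{j,B}=\sqrt{b_2}\gamma_1-\sqrt{b_1}\gamma_2$, and concludes orthonormality from the mutual orthogonality of these one-dimensional subspaces together with $\|\varphi_0\|_2=1$. You instead treat Theorem~\ref{thm:dirHaar} as a black box (a Parseval frame), carry out the same unit-norm computation $b_1+b_2=1$, and then invoke the standard fact that a Parseval frame consisting of unit vectors is an orthonormal basis, proved inline by applying \eqref{def:tight1} to $f=h_k$. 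What your route buys is modularity and generality: it needs only the \emph{statement} of the theorem, not the internal subspace decomposition, and the Parseval-plus-unit-norm lemma applies to any Hilbert space frame (it even rules out repeated elements automatically). What the paper's route buys is transparency about \emph{why} distinct generators are orthogonal — they live in mutually orthogonal subspaces indexed by $(j,B)$ — which is arguably the more structural insight. One further remark: since the paper's introduction \emph{defines} an orthonormal basis as a tight frame with $C_1=C_2=1$ and unit-norm elements, your norm computation alone already suffices under that definition; your final orthogonality argument is a bonus that establishes the genuine pairwise-orthogonality property. Your side observation that $c_B\ge 3$ forces $\|\psi_{j,B}^{(\ell_1,\ell_2)}\|_2^2=b_{\ell_1}+b_{\ell_2}<1$ is also correct and explains why the hypothesis $c_B\le 2$ is sharp.
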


\begin{proof}
Note that $L_2(K) =\overline{\cV_0\oplus_{j\in \N_0, B\in \cB_j} \cW_{j,B}}$ from the proof of Theorem~\ref{thm:dirHaar}.
If $c_B\le2$, then there is at most one element $\psi_{j,B}=\sqrt{b_2}\gamma_1-\sqrt{b_1}\gamma_2$ in $\Psi_{j,B}$ and $\|\psi_{j,B}\|_2=1$ for all $j,B$. Note that $\|\varphi_0\|_2=1$ also. Consequently, each $\cW_{j,B}$ is at most one-dimensional. Hence, $X(\{\cB_j\}_{j\in\N_0})$ is orthonormal. $\blacksquare$
\end{proof}

The Haar orthonormal wavelets on $I=[0,1]$ is a special case of the consequence of Corollary~\ref{cor:ONB}. It is with respect to the hierarchical partition $\{\cI_j\}_{j\in\N_0}$ of the unit interval $I$ with  $ \cI_j:=\{I_{j,k} := [2^{-j}k,2^{-j}(k+1)]\setsep k = 0,\ldots, 2^{j}-1\}$.  Note that each  $I_{j,k}$ has exactly two children subintervals $I_{j+1,2k}$ and  $I_{j+1,2k+1}$ with the same size. Such a type of partition is called \emph{dyadic}.  More generally,  in  dimension $d$,  we have the following result, which  includes directional Haar tight framelets in \cite{HLZ:AML,Li:DHF} as special cases.

\begin{corollary}
For $j\in\N_0$, let
$\cI_j:=\{I_{j,k} := [2^{-j}k,2^{-j}(k+1)]\setsep k = 0,\ldots, 2^{j}-1\}$
and define
\begin{equation}\label{B:dyadic}
\cB_j:=\otimes_d \cI_j:=\{I_{j,k_1}\times I_{j,k_2}\times \cdots\times I_{j,k_d}\setsep 0\le k_1,\ldots, k_d<2^{j}\}.
\end{equation}
Then, the system $X(\{\cB_j\}_{j\in\N_0})$  defined  as in Theorem~\ref{thm:dirHaar} is a tight frame for $L_2([0,1]^d)$. In particular, when $d=1$, it is the Haar orthonormal wavelets $X(I;\phi,\psi)$  defined  as in \eqref{D1Haar} and when $d=2$, it is the directional Haar tight framelets $X(I^2;\varphi,\psi)$ defined as in \eqref{D2DHF}.
\end{corollary}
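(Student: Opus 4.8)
The plan is to deduce the tight-frame property directly from Theorem~\ref{thm:dirHaar} and then to read off the two special cases by a direct computation of the generators \eqref{def:psijb}. So the first task is to verify that the dyadic family $\{\cB_j\}_{j\in\N_0}$ of \eqref{B:dyadic} is a hierarchical partition of $K=[0,1]^d$. For the \emph{root property}, since $\cI_0=\{[0,1]\}$ we get $\cB_0=\{[0,1]^d\}=\{K\}$, and for each $j$ the set $\cB_j$ consists of the $2^{jd}$ closed cubes of side $2^{-j}$ tiling $[0,1]^d$; they cover $K$, each has measure $2^{-jd}>0$, and two distinct such cubes meet only along a common face, hence in a set of Lebesgue measure $0$. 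For the \emph{nested property}, I would use that each dyadic interval $I_{j-1,k}$ is the essentially disjoint union of $I_{j,2k}$ and $I_{j,2k+1}$; taking the $d$-fold Cartesian product shows that every $B\in\cB_{j-1}$ is the essentially disjoint union of its $c_B=2^d$ children in $\cB_j$. For the \emph{density property}, every $B\in\cB_j$ is a cube of side $2^{-j}$, so $\diam(\cB_j)=\sqrt{d}\,2^{-j}\to0$ as $j\to\infty$. With the three properties established, Theorem~\ref{thm:dirHaar} applies verbatim and gives that $X(\{\cB_j\}_{j\in\N_0})$ is a tight frame for $L_2([0,1]^d)$.

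The second task is the explicit identification. In the dyadic case all children of a block are congruent, so $|B_\ell|=|B|/2^d$ and $b_\ell=2^{-d}$ for every $\ell$; substituting into \eqref{def:psijb} collapses each generator to $\psi_{j,B}^{(\ell_1,\ell_2)}=2^{-d/2}(\gamma_{\ell_1}-\gamma_{\ell_2})$, and since a child of a level-$j$ block lies in $\cB_{j+1}$ with $\gamma_\ell=2^{(j+1)d/2}\chi_{B_\ell}$, this simplifies to the clean form $\psi_{j,B}^{(\ell_1,\ell_2)}=2^{jd/2}\bigl(\chi_{B_{\ell_1}}-\chi_{B_{\ell_2}}\bigr)$, the $L_2$-normalizing factor $2^{jd/2}$ being exactly the dilation normalization of \eqref{D1Haar} and \eqref{D2DHF}. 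For $d=1$ there is a single pair ($c_B=2$), and writing the two halves of $I_{j,k}$ as $B_1,B_2$ gives $\psi_{j,B}^{(1,2)}=2^{j/2}(\chi_{B_1}-\chi_{B_2})=2^{j/2}\psi(2^j\cdot-k)=\psi_{j,k}$ for the Haar mother wavelet $\psi$; together with $\varphi_0=\chi_{[0,1]}=\phi$ this is exactly $X(I;\phi,\psi)$ of \eqref{D1Haar}, and since $c_B\le2$ Corollary~\ref{cor:ONB} confirms orthonormality. For $d=2$ each block has $c_B=4$ children and ${4 \choose 2}=6$ generators; enumerating the four children of $B_{j,(k_1,k_2)}$ as $B_1,\dots,B_4$ consistently with the labeling in \eqref{DHF:generators} identifies $\psi_{j,B}^{(\ell_1,\ell_2)}$ with the dilated and translated copy $2^{jd/2}\psi^{(\ell_1,\ell_2)}(2^j\cdot-k)$, and with $\varphi_0=\chi_{[0,1]^2}=\varphi$ the system becomes precisely the regrouped directional Haar tight framelets \eqref{D2DHF:regroup}, i.e.\ $X(I^2;\varphi,\Psi)$ of \eqref{D2DHF}.

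I do not anticipate any deep difficulty: once the hierarchical-partition axioms are checked, tightness is immediate from Theorem~\ref{thm:dirHaar}. The only step requiring genuine care is the bookkeeping in the identification---fixing one enumeration of the $2^d$ children of a cube compatible with the fixed labeling $B_1,\dots,B_4$ underlying \eqref{DHF:generators}, and tracking the normalization so that the factor $2^{jd/2}$ produced by $\gamma_\ell=2^{(j+1)d/2}\chi_{B_\ell}$ coincides with the dilation normalization used in \eqref{D1Haar} and \eqref{D2DHF}. Once that labeling is fixed, both special cases follow by direct substitution.
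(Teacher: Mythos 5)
Your proposal is correct and takes essentially the same route as the paper's own (very terse) proof, which just observes that $\{\cB_j\}_{j\in\N_0}$ is a hierarchical partition with $c_B=2^d$ children per block and then invokes Theorem~\ref{thm:dirHaar} and Corollary~\ref{cor:ONB}; you simply fill in the verification of the three partition axioms and the explicit generator computation. One remark: the normalization $2^{jd/2}$ you derive from \eqref{def:psijb} is the correct one, so for $d=2$ the level-$j$ generators carry the factor $2^{j}$ rather than the factor $2^{j/2}$ written verbatim in \eqref{D2DHF} --- that exponent in the paper is a typo (with $2^{j/2}$ the level-$j$ functions would have $L_2$-norms decaying like $2^{-j/2}$ and the system could not be tight), and your identification is the correct reading of it.
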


\begin{proof}
It is easy to show that $\{\cB_j\}_{j\in\N_0}$ is a hierarchical partition of the unit block $I^d :=[0,1]^d$ in $d$-dimension. Each $B\in\cB_j$ has exactly $2^d$ children sub-blocks in $\cB_{j+1}$. The conclusions  follow directly from Theorem~\ref{thm:dirHaar} and Corollary~\ref{cor:ONB}.  $\blacksquare$
\end{proof}

The blocks in $\cB_j$  defined as in \eqref{B:dyadic} are dyadic. In practice, as demonstrated in Fig.~\ref{fig:coarse-grain-toy}, intervals used to identify nodes on a graph are not necessarily dyadic. Hence, we introduce the \emph{adaptive directional Haar tight framelets} (AdaDHF) based on the following hierarchical partition of the unit square $I^d=[0,1]^d$, whose sub-blocks are not necessarily dyadic.
\begin{corollary}
\label{cor:AdaDHF}
For each  $s=1,\ldots,d$, let $\{\cI_j^s\}_{j\in\N_0}$ be a hierarchical partition of the unit interval $I=[0,1]$, where
$
\cI_j^s:=\{I_{j,k}^s\setsep k = 1,\ldots, n_{j,s}\}.
$
Define
\begin{equation}\label{B:non-dya}
\cB_j:=\otimes_d \cI_j^s = \{I_{j,k_1}\times\cdots\times I_{j,k_d}\setsep k_s=1,\ldots, n_{j,s}, s = 1,\ldots, d\}.
\end{equation}
Then, $\{\cB_j\}_{j\in\N_0}$ is a hierarchical partition of the unit block $I^d=[0,1]^d$ and  the system $X(\{\cB_j\}_{j\in\N_0})$  defined  as in Theorem~\ref{thm:dirHaar} is a tight frame for $L_2([0,1]^d)$. In particular, for any $J\in\N_0$, the cut-off system $X(\{\cB_j\}_{j=0}^J)$ as defined in \eqref{def:cut-off} is a tight frame for $\cV_J$.
\end{corollary}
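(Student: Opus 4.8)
The plan is to reduce everything to Theorem~\ref{thm:dirHaar} and Corollary~\ref{cor:cut-off}: once we verify that the product family $\{\cB_j\}_{j\in\N_0}$ defined in \eqref{B:non-dya} is a genuine hierarchical partition of the unit block $I^d=[0,1]^d$ in the sense of conditions a)--c), both tight-frame assertions follow immediately — the first from Theorem~\ref{thm:dirHaar} applied with $K=I^d$, and the second (the cut-off version) from Corollary~\ref{cor:cut-off}. Thus the entire task is the coordinatewise verification of the three defining properties, built up from the one-dimensional hierarchical partitions $\{\cI_j^s\}_{j\in\N_0}$.

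First I would check the \emph{root property}. Each $\cB_j$ is finite with $\prod_{s=1}^d n_{j,s}$ members, and since $\cI_0^s=\{[0,1]\}$ for every $s$, we get $\cB_0=\{I^d\}$. Because $\cup_{k}I_{j,k}^s=I$ for each $s$, distributing the product over the unions gives $\cup_{B\in\cB_j}B=\prod_{s=1}^d(\cup_{k}I_{j,k}^s)=I^d$. Positivity of the $d$-dimensional measure is clear, as $|\prod_s I_{j,k_s}^s|=\prod_s|I_{j,k_s}^s|>0$. For the almost-disjointness, two distinct blocks must differ in at least one coordinate $s$, where $|I_{j,k_s}^s\cap I_{j,k_s'}^s|=0$; then $|B_1\cap B_2|=\prod_s|I_{j,k_s}^s\cap I_{j,k_s'}^s|=0$ since this product carries a zero factor.

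Next I would verify the \emph{nested property}. Fix $B=\prod_{s=1}^d I_{j-1,k_s}^s\in\cB_{j-1}$. Nestedness of each $\{\cI_j^s\}$ splits each factor as $I_{j-1,k_s}^s=\cup_{\ell_s}I_{j,\ell_s}^s$ over its children in $\cI_j^s$, and expanding the product yields $B=\cup_{(\ell_1,\ldots,\ell_d)}\prod_s I_{j,\ell_s}^s$, a union of $c_B=\prod_s c_{I_{j-1,k_s}^s}$ blocks, each belonging to $\cB_j$; hence $\cB_j$ is obtained by splitting the blocks of $\cB_{j-1}$.

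The step requiring the most care is the \emph{density property}, since for a product box the relevant quantity is the \emph{Euclidean} diameter rather than a one-dimensional length. Here I would use the elementary box bound $\diam(\prod_s I_{j,k_s}^s)=(\sum_s\diam(I_{j,k_s}^s)^2)^{1/2}\le\sqrt{d}\,\max_s\diam(\cI_j^s)$. Because each $\{\cI_j^s\}$ is a one-dimensional hierarchical partition, $\diam(\cI_j^s)\to0$ as $j\to\infty$ for every $s$, whence $\diam(\cB_j)\le\sqrt{d}\,\max_s\diam(\cI_j^s)\to0$. Apart from keeping this product-box diameter estimate honest, no genuine obstacle arises; with a), b), c) established, $\{\cB_j\}_{j\in\N_0}$ is a hierarchical partition of $I^d$, and the two tight-frame conclusions are inherited verbatim from Theorem~\ref{thm:dirHaar} and Corollary~\ref{cor:cut-off}.
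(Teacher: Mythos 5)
Your proposal is correct and follows exactly the paper's route: verify that $\{\cB_j\}_{j\in\N_0}$ satisfies the three defining properties of a hierarchical partition of $I^d$, then invoke Theorem~\ref{thm:dirHaar} and Corollary~\ref{cor:cut-off}. The paper treats this verification as immediate ("by the definition of $\cB_j$"), whereas you spell out the root, nested, and density checks — including the product-box diameter bound $\diam(B)\le\sqrt{d}\,\max_s\diam(\cI_j^s)$ — which is a faithful, more detailed rendering of the same argument.
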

\begin{proof}
Since $\{\cI_j^s\}_{j\in\N_0}$ is a hierarchical partition of $I$, by the definition of $\cB_j$, $\{\cB_j\}_{j\in\N_0}$ is a hierarchical partition of the unit square $I^d$. The conclusions follow directly from Theorem~\ref{thm:dirHaar} and Corollary~\ref{cor:cut-off}.  $\blacksquare$
\end{proof}

\section{Digraph Signal Representations}
\label{sec:digraph}
In this section, we use the AdaDHF systems  developed in Corollary~\ref{cor:AdaDHF} of Section~\ref{sec:dirHaar} to investigate digraph signal representations. We study representations of signals on undirected graphs first and then turn to digraph signal representations.

\subsection{The coarse-grained chain of an undirected graph}
\label{subsec:coarse-grained-graph}
We start with undirected graphs and the coarse-grained chain of an undirected graph.  For an \emph{undirected graph} $\gph=(V,W)$ with vertex (or node) set $V=\{v_1,\ldots,v_n\}$ and adjacency matrix  $W: V\times V\rightarrow [0,\infty)$. We use $|V|$ (abuse of notation) to denote the number of vertices of $\gph$. The \emph{degree} of a vertex $v_i$ is denoted by $\dG(v_i):=\sum_{j=1}^n W(v_i,v_j)$. If $W(v_i,v_j)>0$, then it corresponds to an \emph{edge} $(v_i,v_j)$ (unordered pair). Two vertices $v_i, v_j$ are said to be \emph{connected} if there exists a \emph{path} between  them, that is, $[W^m](v_i,v_j)\neq0$ for some positive integer $m$. The graph $\gph$ is said to be \emph{connected} if there exists a path between any two vertices. Throughout the paper, we only consider \emph{connected} graphs.

Let $\gph=(V,W)$ and $\gph^{cg}=(V^{cg},W^{cg})$ be two undirected graphs. We say that $\gph^{cg}$ is a \emph{coarse-grained graph} of $\gph$ if $V^{cg}$ is a \emph{partition} of $V$; i.e., there exists subsets $U_1,\ldots,U_m$ of $V$ for some $m\in\N$ such that
\[
V^{cg} =\{U_1,\ldots,U_m\},\quad
U_1\cup\cdots\cup U_m = V, \quad
U_i\cap U_j = \emptyset,\quad 1\le i< j\le m.
\]
In such a case, each node $U_i$ of $\gph^{cg}$ is called a \emph{cluster}  from $\gph$. The edges of $\gph^{cg}$ are edges between clusters. Clusters  $U_1,\ldots, U_m$ define an \emph{equivalence relation} on $\gph$: two vertices $u,v\in\gph$ are equivalent, denoted by $u\sim v$, if $u$ and $v$ belong to the same cluster. An equivalent class (cluster) in $\gph$, which is a node in $\gph^{cg}$, associated with a vertex $v\in V$, then can be denoted as $[v]_{\gph^{cg}}:=\{u\in\gph \setsep u\sim v\}$, and we have $V^{cg}=V/{\sim} =\{[v]_{\gph^{cg}}\setsep v \in V\}$. If no confusion arises, we will drop the subscript $\gph^{cg}$ and simply use $[v]$ to denote a cluster from $\gph$. Note that a vertex $v$ in $\gph$ can be viewed as $[v]_\gph=\{v\}$, which is a singleton.

Given an undirected graph $\gph=(V,W)$, there are many clustering algorithms can be used to obtain clusters from $\gph$, see  e.g., \cite{CMZ:ACHA:18,arjuna2013,gavish2010multiscale,lafonncut,van2001graph}. Once we obtain the set $\{U_1,\ldots,U_m\}=:V^{cg}$ of clusters from $\gph$, we can define the weighted adjacency matrix $W^{cg}$ on $V^{cg}\times V^{cg}$ by
\begin{equation}\label{defn:wG:coarse}
W^{cg}([u],[v]):=\sum_{u\in[u]}\sum_{v\in[v]}{W(u,v)}, \quad [u],[v]\in V^{cg}.
\end{equation}
Then, the new graph $\gph^{cg}:=(V^{cg},W^{cg})$  is  a coarse-grained graph of $\gph$. Given the new graph $\gph^{cg}$, we can further apply clustering process on it and obtain a coarse-grained graph of $\gph^{cg}$. Recursively doing such clustering processes, we would obtain a \emph{chain} of graphs from the original graph $\gph$. More precisely, let $J\ge 0$ be an integer. We say that the sequence $\gph_{J\rightarrow 0}:=(\gph_J, \gph_{J-1},\ldots,\gph_{0})$ with $\gph_J\equiv \gph$ is a \emph{coarse-grained chain} of $\gph$  if $\gph_j=(V_j, W_j)$ is a coarse-grained graph of $\gph$ for all $0\le j\le J$ and $[v]_{\gph_j}\subseteq [v]_{\gph_{j-1}}$ for each $j=1,\ldots, J$ and for all $v\in V$. Note that, we treat each vertex $v$ of the finest level graph $\gph_J\equiv\gph$ as a cluster of singleton. See Fig.~\ref{fig:coarse-grain-toy} and Fig.~\ref{fig:coarse-grain-toy:Gy} for  illustrations of coarse-grained chains.

Once we have a coarse-grained chain $\gph_{J\rightarrow 0}$ of $\gph$, we next discuss how to associate it with a hierarchical partition of $I=[0,1]$.  Without loss of generality, we could assume that $\gph_0=(V_0,W_0)$ has only one node, which is a cluster consisting of all vertices of $\gph$. If not, we simply add such a graph to the chain.   Now we  define $\cI_j$ recursively as follows (c.f. Fig.~\ref{fig:coarse-grain-toy} and  Fig.~\ref{fig:coarse-grain-toy:Gy}).
\begin{itemize}
\item[{\rm 1)}] $\cI_0=\{I=[0,1]\}$ is the root node.

\item[{\rm 2)}] Suppose $\cI_{j-1}=\{I_{j-1,k}=[a_k,b_k]\setsep k=1,\ldots,|V_{j-1}|\}$ has been defined and associated with the graph $\gph_{j-1}=(V_{j-1},W_{j-1})$. Then, $I_{j-1,k}$ is associated with a node $u_k\in V_{j-1}$.

\item[{\rm 3)}] For each $u_k\in V_{j-1}$, denote (and order) the children of $u_k$ in $\gph_j=(V_j,W_j)$ as $u_{k,1},\ldots, u_{k,m}\in V_j$ and define subintervals $I_{j,k,1},\ldots, I_{j,k,m}$ by
\begin{equation}\label{def:I:split}
I_{j,k,s} = [a_k+w_{s-1},a_k+w_s],\quad s=1,\ldots, m,
\end{equation}
where $w_s:=(b_k-a_k)\times \frac{\sum_{i=1}^s \dG(u_{k,i})} {\sum_{i=1}^m \dG(u_{k,i})}$. Note that $[a_k,b_k]=\cup_s I_{j,k,s}$. Collect all  such subintervals $I_{j,k,s}$ as the collection   $\cI_j:=\{I_{j,k'}\setsep k'=1,\ldots, |V_j|\}$. Then $\cI_j$ is associated with the graph $\gph_j$.
\end{itemize}

Given a hierarchical partition $\{\cI_j\}_{j=0}^J$ that is associated with a coarse-grained chain of the graph $\gph=(V,W)$, then the vertex $v\in V$ is  associated with a subinterval $I_v\in \cI_{J}$.  A  signal $f:V\rightarrow \C$ on the graph can  be identified  as a function
\[
f = \sum_{v\in V} f(v) \chi_{I_v}
\]
defined on $I=[0,1]$.  We can thus define the space
\[
L_2(\gph):=L_2(\gph|\gph_{J\rightarrow0}):=\spn \{\chi_{I_v} \setsep v\in V\}\subseteq L_2([0,1])
\]
 with the usual norm $\|\cdot\|_2$ and inner product $\la\cdot,\cdot\ra$ for  $L_2([0,1])$. We immediately have the following result from Corollary~\ref{cor:cut-off}.

\begin{theorem}\label{thm:G}
The system $X(\{\cI_j\}_{j=0}^J)=\{\varphi_0\}\cup\{\Psi_{j,I}\setsep I\in \cI_j\}_{j=0}^{J-1}$  defined as in \eqref{def:cut-off} is a tight frame for $L_2(\gph)$.
\end{theorem}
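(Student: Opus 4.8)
The plan is to recognize Theorem~\ref{thm:G} as a direct instance of Corollary~\ref{cor:cut-off}, applied to the compact set $K=I=[0,1]\subseteq\R$ with the finite sequence $\{\cI_j\}_{j=0}^J$ playing the role of $\{\cB_j\}_{j=0}^J$. Two things must be checked. First, that $\{\cI_j\}_{j=0}^J$ genuinely satisfies the root and nested properties of a hierarchical partition up to level $J$; and second, that the signal space $L_2(\gph)$ coincides with the space $\cV_J$ defined in \eqref{def:Vj}. Once these two identifications are in place, the tight-frame conclusion follows with no further work.

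For the first point, I would verify the defining properties directly from the recursive construction \eqref{def:I:split}. The root property holds by construction: $\cI_0=\{[0,1]\}$, each $\cI_j$ is a finite collection (of size $|V_j|$) of subintervals of $[0,1]$, and \eqref{def:I:split} splits each parent interval $[a_k,b_k]$ into children with disjoint interiors satisfying $\cup_s I_{j,k,s}=[a_k,b_k]$, so inductively $\cup_{I\in\cI_j}I=[0,1]$. The positivity $|I|>0$ for every $I\in\cI_j$ requires that each increment $w_s-w_{s-1}=(b_k-a_k)\,\dG(u_{k,s})/\sum_i\dG(u_{k,i})$ be strictly positive; this is exactly where the standing assumption that $\gph$ is \emph{connected} enters, since then every vertex has positive degree and the degree of any cluster, being the sum of the degrees of its constituent vertices, is positive as well. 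The nested property is precisely the content of \eqref{def:I:split}, which splits the interval associated to a node $u_k\in V_{j-1}$ into the subintervals associated to the children of $u_k$ in $\gph_j$. Note that the density property is \emph{not} needed here, because Corollary~\ref{cor:cut-off} concerns the cut-off system and only requires a tight frame of the finite-dimensional space $\cV_J$.

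For the second point, recall that at the finest level $\gph_J\equiv\gph$ each vertex $v\in V$ is treated as a singleton cluster, so the finest partition is exactly $\cI_J=\{I_v\setsep v\in V\}$. Hence, by \eqref{def:Vj},
\[
\cV_J=\spn\{\chi_I\setsep I\in\cI_J\}=\spn\{\chi_{I_v}\setsep v\in V\}=L_2(\gph).
\]
With both identifications established, Corollary~\ref{cor:cut-off} applied to $K=[0,1]$ and $\cB_j=\cI_j$ yields that $X(\{\cI_j\}_{j=0}^J)$ is a tight frame for $\cV_J=L_2(\gph)$, which is the assertion.

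The step I expect to demand the most care is not an analytic difficulty but the bookkeeping in the first point, in particular confirming that the degree-weighted splitting \eqref{def:I:split} never produces a degenerate (measure-zero) interval; this is exactly where the connectedness hypothesis on $\gph$ is indispensable. Everything else is a transcription of the hierarchical-partition axioms, after which the theorem reduces cleanly to the already-established cut-off tight-frame statement.
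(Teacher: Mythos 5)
Your proposal is correct and follows exactly the paper's route: the paper offers no separate proof of Theorem~\ref{thm:G} beyond invoking Corollary~\ref{cor:cut-off}, which is precisely your reduction, with the identification $\cV_J=\spn\{\chi_{I_v}\setsep v\in V\}=L_2(\gph)$. Your explicit verification that the degree-weighted splitting \eqref{def:I:split} yields nondegenerate intervals (via connectedness of $\gph$) and that the density axiom is not needed for the cut-off system is a careful filling-in of details the paper leaves implicit, but it is the same argument.
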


We remark that, from Corollary~\ref{cor:ONB},  when each $I_{j,k}$ has at most two children,   such a system $X(\{\cI_j\}_{j=0}^J)$ is an orthonormal basis for $L_2(\gph)$ (c.f. \cite{treepap,CMZ:ACHA:18}).

\subsection{Digraph signal representations}
\label{subsec:digraph}

Now continue to the digraph case.
For a digraph $(V, W)$, the \textit{underlying undirected graph} is given by $(V, W_0)$, where $W_0=(W+W^\top)/2$.  A digraph  is \textit{weakly connected} if the underlying undirected graph is connected.  For simplicity, we restrict ourselves  in this paper to weakly connected digraph while results in the paper can be easily extended to general digraphs. To represent signals on the digraph $\gph$, we use the following steps to produce a pair $(\gph^x,\gph^y)$ of two undirected graphs:
\begin{itemize}
\item[{\rm 1)}] {\em Extension}: we define the \textit{extended graph} $\gph_e=(V, W_e)$ by  $W_e= I+W$, which is the same graph as $\gph=(V,W)$ except for a new (or enhanced) self-loop inserted at each vertex. This increases the connectivity of the undirected graphs obtained in the next step.

\item[{\rm 2)}] {\em Symmetrization}: define the \textit{pre-symmetrized graph}  $\gph_1=(V,W_1)$ and the \textit{post-symmetrized graph} (\cite{diagraphcluster_ohiostate_2011})  $\gph_2=(V, W_2)$ for the digraph $\gph_e=(V,W_e)$  by $W_1:=W_eW^\top_e$ and  $W_2:=W^\top_eW_e$.

\item[{\rm 3)}] {\em Post-processing}: remove the self--loops of $\gph_1$  and $\gph_2$ by $W^x:=W_1-\diag(W_1)$ and $W^y:=W_2-\diag(W_2)$.  Define $\gph^x = (V,W^x)$ and $\gph^y=(V,W^y)$.
\end{itemize}
It is not difficult to show that if $\gph$ is weakly connected, then $\gph^x$ and $\gph^y$ are connected (undirected) graphs.

We next use the pair $(\gph^x,\gph^y)$  to study signals defined on $\gph$.  As discussed in previous subsections, using various clustering algorithms, we can obtain coarse-grained chains $\gph_{J_x\rightarrow0}^x$ and $\gph^y_{J_y\rightarrow 0}$ of $\gph^x$ and $\gph^y$, respectively for some $J_x, J_y\in\N_0$. Without loss of generality, we can assume $J_x=J_y=:J$. In fact, if $J_x\neq J_y$, say $J_x<J_y$, then we  simply extend the chain $\gph^x_{J_x\rightarrow0}$ as $\gph^x_{J_y\rightarrow 0}$ by appending $\gph^x$:
\[
\gph^x_{J_y\rightarrow0}:=(\gph^x_{J_y},\ldots,\gph^x_{J_x+1}, \gph^x_{J_x},\ldots,\gph^x_0),
\]
where $\gph^x_{j}\equiv \gph^x$ for all $j\ge J_x$.

For each of the coarse-grained chains  $\gph^x_{J\rightarrow 0}$ and $\gph^y_{J\rightarrow 0}$, it is associated with a hierarchical partition $\{\cI_j^x\}_{j=0}^{J}$ and $\{\cI_j^y\}_{j=0}^{J}$, respectively.
Define
\begin{equation}\label{def:Bxy}
\cB_j:=\cI_j^x \otimes \cI_j^y:=\{I^x \times I^y \setsep  I^x\in \cI_j^x, I^y\in \cI_j^y \},\quad j=0,\ldots,J.
\end{equation}
Then, by Corollary~\ref{cor:AdaDHF}, we immediately have the following result.

\begin{theorem}\label{thm:AdaDHF:GxGy}  Let $\cB_j, j=0,\ldots,J$ be defined as in \eqref{def:Bxy} from  $\cI_j^x$ and $\cI_j^y$ associating with the coarse-grained chains $\gph^x_{J\rightarrow 0}$ and $\gph^y_{J\rightarrow0}$ for graphs $\gph^x, \gph^y$, respectively.  Then, the system
 $X(\{\cB_j\}_{j=0}^J)$  defined as in \eqref{def:cut-off} is a tight frame for
 $L_2(\gph^x,\gph^y):=L_2(\gph^x|\gph^x_{J\rightarrow0},\gph^y|\gph^y_{J\rightarrow0}):=\cV_J=\spn\{\chi_B\setsep B\in \cB_J\}$.
\end{theorem}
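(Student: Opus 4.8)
The plan is to recognize Theorem~\ref{thm:AdaDHF:GxGy} as the specialization of Corollary~\ref{cor:AdaDHF} to dimension $d=2$, with the two one-dimensional partitions taken to be $\{\cI_j^x\}_{j=0}^J$ and $\{\cI_j^y\}_{j=0}^J$, and then to transport the conclusion to the graph-signal space $L_2(\gph^x,\gph^y)$. Accordingly, almost all of the work is in checking that the hypotheses of Corollary~\ref{cor:AdaDHF} (equivalently the cut-off statement of Corollary~\ref{cor:cut-off}) are met; once that is done, the tightness is immediate and carries no new analytic content.

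First I would verify that each of $\{\cI_j^x\}_{j=0}^J$ and $\{\cI_j^y\}_{j=0}^J$ is a (finite) hierarchical partition of $I=[0,1]$. The root property holds since, by construction, $\cI_0=\{[0,1]\}$ and $\cup_k I_{j,k}=I$ at every level. The nested property is built into the recursive splitting rule \eqref{def:I:split}: for a node $u_k\in V_{j-1}$ with children $u_{k,1},\dots,u_{k,m}\in V_j$, the subintervals $I_{j,k,1},\dots,I_{j,k,m}$ are consecutive, have pairwise disjoint interiors, and satisfy $\cup_s I_{j,k,s}=I_{j-1,k}$. Each child interval has positive length because the partial degree sums $\sum_{i=1}^s \dG(u_{k,i})$ are strictly increasing (degrees are positive on connected graphs), so every block has positive Lebesgue measure. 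Thus the root and nested properties hold up to level $J$ for both chains.

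Next I would form the product partition $\cB_j:=\cI_j^x\otimes\cI_j^y$ as in \eqref{B:non-dya} with $d=2$ and check that $\{\cB_j\}_{j=0}^J$ is a hierarchical partition of $I^2=[0,1]^2$. Here $\cB_0=\{[0,1]^2\}$; the blocks in $\cB_j$ tile $I^2$ with pairwise disjoint interiors, being products of interval tilings of $I$; and the children of a block $I^x\times I^y\in\cB_{j-1}$ are exactly the products of a child of $I^x$ with a child of $I^y$, which again tile $I^x\times I^y$. This is precisely the Cartesian-product hierarchical partition treated in Corollary~\ref{cor:AdaDHF}.

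Finally, applying Corollary~\ref{cor:AdaDHF} to $\{\cB_j\}_{j=0}^J$ yields that the cut-off system $X(\{\cB_j\}_{j=0}^J)$ is a tight frame for $\cV_J=\spn\{\chi_B\setsep B\in\cB_J\}$, and since each finest block $B\in\cB_J$ is the product of the two subintervals identifying a vertex $v$, this span coincides with $L_2(\gph^x,\gph^y)$. The one point requiring slight care is that the chains here are finite, so the density property of a full infinite hierarchical partition does not hold; this is harmless, because the cut-off result of Corollary~\ref{cor:cut-off} relies only on the finite telescoping decomposition $\cV_J=\cV_0\oplus\bigoplus_{j=0}^{J-1}\cW_j$ and not on density. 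Equivalently, one may extend each finite chain to an infinite one by declaring every finest block to be its own unique child at all later levels, which preserves the root and nested properties and leaves the cut-off system unchanged. I expect this matching of the product-of-intervals construction to the hypotheses of Corollary~\ref{cor:AdaDHF}, together with the identification of $\cV_J$ with $L_2(\gph^x,\gph^y)$, to be the main (and essentially the only) obstacle.
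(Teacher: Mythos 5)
Your proposal follows exactly the paper's route: the paper proves this theorem simply by invoking Corollary~\ref{cor:AdaDHF} (whose cut-off clause rests on Corollary~\ref{cor:cut-off}), which is precisely your reduction. Your extra care about the chains being finite --- observing that the cut-off argument needs only the telescoping decomposition $\cV_J=\cV_0\oplus\bigoplus_{j=0}^{J-1}\cW_j$ and not the density property, or alternatively extending each chain trivially --- is a point the paper leaves implicit, and it is handled correctly.
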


\medskip

For signals $f:V\rightarrow \C$ defined on the digraph $\gph$, we can define the digraph signal space $L_2(\gph)$ as follows. For $v\in V$, there are $I_v^x \in \cI_J^x$ and $I_v^y\in \cI_J^y$. $B_v:=I_v^x\times I_v^y$ is then a block in $\cB_J$. Thus, $f$ can be identified as a function defined on $[0,1]^2$:
\begin{equation}\label{def:f:G}
f = \sum_{v\in V} f(v) \chi_{B_v},\quad B_v = I_v^x\times I^y_v, \, v\in V.
\end{equation}
Hence, we can define $L_2(\gph)$ as
\[
L_2(\gph):=L_2(\gph|(\gph^x_{J\rightarrow0},\gph^y_{J\rightarrow0})):=\spn\{\chi_{B_v} \setsep v\in V\}.
\]
with the usual norm $\|\cdot\|_2$ and inner product $\la\cdot,\cdot\ra$ for  $L_2([0,1]^2)$.

Since $f\in L_2(\gph)$ is supported on $\cup_{v\in V} B_v$,  we can conclude this section by the following result.
\begin{corollary}\label{cor:AdaDHF:G}
Let $X(\{\cB_j\}_{j=0}^J)$ be defined as in Theorem~\ref{thm:AdaDHF:GxGy} and $B_v, v\in V$ be blocks defined as in \eqref{def:f:G} associated with the digraph $\gph$. Define
\begin{equation}\label{X:G}
\begin{aligned}
X(\{\cB_j\}_{j=0}^J|_\gph)&:=\{\varphi_0\}\cup\\
&\{\psi\setsep |\supp \psi \cap B_v|>0 \mbox{ for some } v\in V, \psi\in\Psi_{j,B}, B\in\cB_j\}_{j=0}^{J-1}.
\end{aligned}
\end{equation}
Then $X(\{\cB_j\}_{j=0}^J|_\gph)$ is a tight frame for $L_2(\gph)$.
\end{corollary}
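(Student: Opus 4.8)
The plan is to reduce everything to the tight frame identity already established in Theorem~\ref{thm:AdaDHF:GxGy} for the full system $X(\{\cB_j\}_{j=0}^J)$ on $\cV_J$, and then to discard, term by term, precisely those framelets that cannot contribute when the test function is a digraph signal. First I would observe that $L_2(\gph)\subseteq\cV_J$: for each $v\in V$ we have $I_v^x\in\cI_J^x$ and $I_v^y\in\cI_J^y$, so $B_v=I_v^x\times I_v^y\in\cB_J$ and hence $\chi_{B_v}\in\cV_J$ by \eqref{def:Vj}; taking spans gives the inclusion. Consequently every $f\in L_2(\gph)$ may be inserted into the Parseval identity \eqref{def:tight1} provided by Theorem~\ref{thm:AdaDHF:GxGy}, which for the system $X(\{\cB_j\}_{j=0}^J)$ of \eqref{def:cut-off} reads
\[
\|f\|_2^2=|\la f,\varphi_0\ra|^2+\sum_{j=0}^{J-1}\sum_{B\in\cB_j}\sum_{\psi\in\Psi_{j,B}}|\la f,\psi\ra|^2 .
\]

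Next I would use the support structure of digraph signals to kill the superfluous terms. Fix $f\in L_2(\gph)$. By \eqref{def:f:G}, $f=\sum_{v\in V}f(v)\chi_{B_v}$ vanishes almost everywhere outside $\Omega:=\bigcup_{v\in V}B_v$. Now take any framelet $\psi\in\Psi_{j,B}$ that is dropped in passing to $X(\{\cB_j\}_{j=0}^J|_\gph)$, i.e.\ one with $|\supp\psi\cap B_v|=0$ for every $v\in V$. Since $\Omega$ is a finite union of the $B_v$, this forces $|\supp\psi\cap\Omega|=0$, so $f\overline{\psi}=0$ almost everywhere and $\la f,\psi\ra=\int_{[0,1]^2}f\overline{\psi}=0$. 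Hence each discarded framelet contributes a zero summand, and the double sum above collapses to exactly the terms indexed by the retained generators. Together with the always-kept $\varphi_0$-term, this yields $\|f\|_2^2=\sum_{h\in X(\{\cB_j\}_{j=0}^J|_\gph)}|\la f,h\ra|^2$ for all $f\in L_2(\gph)$, which is the tight frame identity \eqref{def:tight1}.

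The step needing the most care is not the vanishing of the inner products --- that is immediate from disjointness of supports --- but making sure the retained generators are legitimately a frame for $L_2(\gph)$ in the sense of \eqref{def:tight1}. A coarse-level $\psi$ with $|\supp\psi\cap B_v|>0$ for some $v$ may still be supported partly outside $\Omega$, and $\varphi_0=\chi_{[0,1]^2}/\sqrt{|[0,1]^2|}$ need not lie in $L_2(\gph)$ when $\Omega\subsetneq[0,1]^2$. Only the numbers $\la f,h\ra$ for $f\in L_2(\gph)$ enter the identity, and these are unchanged if each $h$ is replaced by its orthogonal projection $P_{L_2(\gph)}h$, because $\la f,h\ra=\la f,P_{L_2(\gph)}h\ra$ for $f\in L_2(\gph)$. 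Thus, if a literal reading of \eqref{def:tight1} is insisted upon, one passes to $\{P_{L_2(\gph)}h\}$, whose elements lie in $L_2(\gph)$ and satisfy the same Parseval identity. With this understood, the conclusion follows directly from Theorem~\ref{thm:AdaDHF:GxGy} and the support argument, and I would expect the proof to be very short.
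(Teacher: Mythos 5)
Your proposal is correct and follows essentially the same route as the paper: embed $L_2(\gph)$ into the space where Theorem~\ref{thm:AdaDHF:GxGy} applies, then observe that every discarded framelet has support intersecting $\bigcup_{v\in V}B_v$ only in a set of measure zero and hence pairs to zero with any digraph signal, so the tight frame identity survives the restriction (the paper phrases this via the reconstruction formula \eqref{def:tight2}, you via the Parseval identity \eqref{def:tight1}, which are equivalent). Your additional remark about replacing the retained generators by their orthogonal projections onto $L_2(\gph)$, so that the system literally lies in the space as the definition of a tight frame requires, is a careful refinement of a point the paper's proof leaves implicit.
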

\begin{proof}
Note that
\[
L_2(\gph)\subseteq L_2(\gph^x,\gph^y)\subseteq L_2([0,1]^2).
\]
Hence, by Theorem~\ref{thm:AdaDHF:GxGy}, any $f\in L_2(\gph)$ can be represented by the tight frame system $X(\{\cB_j\}_{j=0}^N)$ as
\[
f = \la f, \varphi_0\ra \varphi_0+\sum_{j=0}^{J-1} \sum_{B\in\cB_j}\sum_{1\le \ell_1<\ell_2\le c_B} \la f, \psi_{j,B}^{(\ell_1,\ell_2)}\ra \psi_{j,B}^{(\ell_1,\ell_2)}.
\]
Since $f$ is supported on $\cup_{v\in V}B_v$, we can discard those of $\psi^{(\ell_1,\ell_2)}_{j,B}$ whose essential support  is not intersecting with any  $B_v, v\in V$. Then,
\[
f = \la f, \varphi_0\ra \varphi_0+\sum_{j=0}^{J-1} \sum_{B\in\cB_j}\sum_{1\le \ell_1<\ell_2\le c_B,|\supp \psi\cap B_v|\neq 0} \la f, \psi_{j,B}^{(\ell_1,\ell_2)}\ra \psi_{j,B}^{(\ell_1,\ell_2)}.
\]
That is, the restriction  $X(\{\cB_j\}_{j=0}^J|_\gph)$ of $X(\{\cB_j\}_{j=0}^J)$ on $\gph$ is a tight frame for $L_2(\gph)$.
This completes the proof. $\blacksquare$
\end{proof}

\section{Illustration Examples}
\label{sec:example}
In this section, we provide some examples to illustrate results in previous sections.

\subsection{Example 1: a tight frame on an  undirected graph}
Let $\gph=(V,W)=\gph_3$ be the graph in Fig.~\ref{fig:coarse-grain-toy} (see also $\gph^x$ in Fig.~\ref{fig:G:Gx:Gy}). That is,
\begin{equation}
\label{def:ex1:G}
W =
\left[
\begin{matrix}
0 & 1 &1 & 0 &0 & 0   \\
1 & 0 &0 & 0 &0 & 0   \\
1 & 0 &0 & 1 &1 & 1   \\
0 & 0 &1 & 0 &1 & 0   \\
0 & 0 &1 & 1 &0 & 0   \\
0 & 0 &1 & 0 &0 & 0   \\
\end{matrix}
\right],
\end{equation}
where the rows and columns are ordered from 1 to 6 with respect to the vertices $a,b,c,d,e,f$ in $V$.

Applying clustering algorithms, e.g., the NHC algorithm in \cite{CMZ:ACHA:18}, to the graph $\gph$,  we can obtain a coarse-grained chain $\gph_{3\rightarrow0}:=(\gph_3,\ldots,\gph_0)$ of $\gph$ as in Fig.~\ref{fig:coarse-grain-toy}. Each vertex in $\gph_{j-1}$ is a cluster of vertices in $\gph_{j}$. Based on such a coarse-grained chain, we can give a hierarchical sequence $\{\cI_j\}_{j=0}^3$ and  build our tight frame system $X(\{\cI_j\}_{j=0}^3)=\{\varphi_0\}\cup\{\Psi_j\}_{j=0}^{2}$ as follows (see Fig.~\ref{fig:coarse-grain-toy}).
\begin{itemize}
\item[{1)}] The root node ($j=0$):  $\gph_0\longleftrightarrow\cI_0:=\{I_{0,1}=I=[0,1]\}$. This is associated with $\varphi_0 = \chi_{[0,1]}$.

\item [{2)}] At level $j=1$:  $\gph_1\longleftrightarrow\cI_1:=\{I_{1,1}, I_{1,2}\}$. The graph $\gph_1$ has two nodes $[a]_{\gph_1}=\{a,b\}$ (degree 3) and $[c]_{\gph_1}=\{c,d,e,f\}$ (degree 9). According to their degrees and \eqref{def:I:split}, we  identify the nodes $[a]_{\gph_1}$ and $[c]_{\gph_1}$ with the intervals
\begin{equation}\label{Gx:Ix:1}
I_{1,1}([a])=\left[0,\frac{3}{12}\right)=\left[0,\frac14\right)\mbox{ and } I_{1,2}([c])=\left[\frac{3}{12},\frac{3+9}{12}\right]=\left[\frac14,1\right],
\end{equation}
respectively. The two subintervals $I_{1,1}$ and $I_{1,2}$ are the two children of $I_{0,1}$. Hence, by \eqref{def:psijb}, $\Psi_0=\{\psi_0\}$ with
\[
\psi_0:=\frac{3\chi_{I_{1,1}}-\chi_{I_{1,2}}}{\sqrt{3}}.
\]
Note that $\|\psi_0\|_2 = 1$ and $\|\psi_0\|_1=0$.

\item [{3)}]At level $j=2$: $\gph_2\longleftrightarrow\cI_2:=\{I_{2,1},I_{2,2},I_{2,3}\}$. The graph $\gph_2$ has three nodes $[a]_{\gph_2}=\{a,b\}$ (degree 3), $[c]_{\gph_2}=\{c,d,e\}$ (degree 8), and $[f]_{\gph_2}=\{f\}$ (degree 1). Similarly, we  identify them  with the intervals
\begin{equation}\label{Gx:Ix:2}
I_{2,1}([a])=\left[0,\frac14\right), I_{2,2}([c])=\left[\frac14,\frac{11}{12}\right), I_{2,3}([f])=\left[\frac{11}{12},1\right],
\end{equation}
respectively. Only $I_{2,2}$ and $I_{2,3}$ are split from $I_{1,2}$. Hence,  by \eqref{def:psijb}, $\Psi_1=\{\psi_1\}$ with
\[
\psi_1:=\frac{\chi_{I_{2,2}}-8\chi_{I_{2,3}}}{\sqrt{6}}.
\]
Note that $\|\psi_1\|_2 = 1$ and $\|\psi_1\|_1=0$.

\item [{4)}] At level $j=3$: $\gph_3\longleftrightarrow\cI_3:=\{I_{3,1},\ldots, I_{3,6}\}$. The graph $\gph_3$ is the underlying graph with 6 vertices. According to their degrees, we  identify the vertices $a,b,c,d,e,f$ with the intervals
\begin{equation}\label{Gx:vertices}
\begin{aligned}
I_{3,1}(a)&=\left[0,\frac16\right),&
I_{3,2}(b)&=\left[\frac16,\frac14\right),&
I_{3,3}(c)&=\left[\frac14,\frac{7}{12}\right),&\\
I_{3,4}(d)&=\left[\frac{7}{12},\frac{3}{4}\right),&
I_{3,5}(e)&=\left[\frac{3}{4},\frac{11}{12}\right),&
I_{3,6}(f) &= \left[\frac{11}{12},1\right],
\end{aligned}
\end{equation}
respectively. Note that $I_{2,1}$ is split to $I_{3,1}$ and $I_{3,2}$ while $I_{2,2}$ is split to $I_{3,3}, I_{3,4}, I_{3,5}$. Hence,  by \eqref{def:psijb}, $\Psi_2=\{\psi_2,\psi_3,\psi_4,\psi_5\}$ with
\[
\begin{aligned}
&\psi_2:=\sqrt{2}({\chi_{I_{3,1}}-2\chi_{I_{3,2}}}), &
\psi_3:=\frac{\sqrt{3}}{2}(\chi_{I_{3,3}}-2\chi_{I_{3,4}}),\\
&\psi_4:=\frac{\sqrt{3}}{2}(\chi_{I_{3,3}}-2\chi_{I_{3,5}}), &
\psi_5:=\sqrt{\frac{3}{2}}(\chi_{I_{3,4}}-\chi_{I_{3,5}}).
\end{aligned}
\]
Note that $\|\psi_i\|_1 = 0$ for $i=2,\ldots, 5$.
\end{itemize}

\medskip

By Theorem~\ref{thm:G}, $X(\{\cI_j\}_{j=0}^3) = \{\varphi_0,\psi_0,\ldots,\psi_5\}$ is a tight frame for  $L_2(\gph)=\spn\{\chi_{I_{3,k}}\setsep k = 1,\ldots,6\}$.

\subsection{Example 2: tight frames on a digraph}

Let the digraph $\gph=(V,W)$ be determined by
\[
W =
\left[
\begin{matrix}
0 & 1 & 0 & 0 & 0 & 0\\
0 & 0 & 0 & 0 & 0 & 0\\
1 & 0 & 0 & 1 & 0 & 1\\
0 & 0 & 0 & 0 & 1 & 0\\
0 & 0 & 1 & 0 & 0 & 0\\
0 & 0 & 0 & 0 & 0 & 0\\
\end{matrix}
\right],
\]
where the rows and columns are ordered from 1 to 6 with respect to the vertices $a,b,c,d,e,f$ in $V$, see Fig.~\ref{fig:G:Gx:Gy}.  After applying the symmetrization processing as discussed in Section~\ref{subsec:digraph}, we obtain two undirected graphs $\gph^x = (V,W^x)$ and $\gph^y=(V,W^y)$, where $\gph^x$ is the same graph considered as in Example~1 (see its adjacency matrix in \eqref{def:ex1:G}) and $\gph^y$ is determined by $W^y$:
\[
W^y=
\left[
\begin{matrix}
0 & 1 & 1 & 1 & 0 & 1\\
1 & 0 & 0 & 0 & 0 & 0\\
1 & 0 & 0 & 1 & 1 & 1\\
1 & 0 & 1 & 0 & 1 & 1\\
0 & 0 & 1 & 1 & 0 & 0\\
1 & 0 & 1 & 1 & 0 & 0\\
\end{matrix}
\right].
\]

\begin{figure}[htpb!]
\begin{minipage}{\textwidth}
\begin{center}
\begin{tikzpicture}[
roundnode/.style={circle, draw=green!60, fill=green!5, very thick, minimum size=7mm},
squarednode/.style={rectangle, draw=red!60, fill=red!5, very thick, minimum size=3mm},
]

\node[squarednode]      (v1)       at (-2.5,0)      {$a$};
\node[squarednode]      (v2)       at (-1.5,0)      {$b$};
\node[squarednode]      (v3)       at (0,0)         {$c$};
\node[squarednode]      (v4)       at (1,0)         {$d$};
\node[squarednode]      (v5)       at (2,0)         {$e$};
\node[squarednode]      (v6)       at (3.5,0)       {$f$};

\draw[->] (v1) -- (v2); \draw[]  (-2.0,0.2) node{1};
\draw[->] (v3) -- (v4);  \draw[]  (0.5,0.2) node{1};
\draw[->] (v4) -- (v5); \draw[]   (1.5,0.2) node{1};
\draw[<-] (v1.south).. controls (-2,-0.8) and (-0.5,-0.8) .. (v3.south);\draw (-1.1,-0.85) node{1};
\draw[<-] (v3.north).. controls (0.5,0.8) and (1.5,0.8)   .. (v5.north); \draw[] (1,0.85)     node{1};
\draw[->] (v3.south).. controls (0.8,-0.8) and (2.4,-0.8) .. (v6.south); \draw[] (1.9,-0.85)  node{1};
\draw[]  (5,0) node[circle,draw](g2){$\gph$};

\node[squarednode]      (v1)       at (-2.5,2)      {$a$};
\node[squarednode]      (v2)       at (-1.5,2)      {$b$};
\node[squarednode]      (v3)       at (0,2)         {$c$};
\node[squarednode]      (v4)       at (1,2)         {$d$};
\node[squarednode]      (v5)       at (2,2)         {$e$};
\node[squarednode]      (v6)       at (3.5,2)       {$f$};

\draw[-] (v1) -- (v2)  (-2.0,2.2) node{1};
\draw[-] (v3) -- (v4)   (0.5,2.2) node{1};
\draw[-] (v4) -- (v5)   (1.5,2.2) node{1};
\draw[-] (v1.south).. controls (-2,1.2) and (-0.5,1.2) .. (v3.south)  (-1.1,1.15) node{1};
\draw[-] (v3.north).. controls (0.5,2.8) and (1.5,2.8)   .. (v5.north)  (1,2.85)     node{1};
\draw[-] (v3.south).. controls (0.8,1.2) and (2.4,1.2) .. (v6.south)  (1.9,1.15)  node{1};
\draw[]  (5,2) node[circle,draw](g1){$\gph^x$};

\node[squarednode]      (v1)       at (-2.5,4)      {$a$};
\node[squarednode]      (v2)       at (-1.5,4)      {$b$};
\node[squarednode]      (v3)       at (0,4)         {$c$};
\node[squarednode]      (v4)       at (1,4)         {$d$};
\node[squarednode]      (v5)       at (2,4)         {$e$};
\node[squarednode]      (v6)       at (3.5,4)       {$f$};

\draw[-] (v1) -- (v2)  (-2.0,4.2) node{1};
\draw[-] (v1.south).. controls (-2,3.2) and (-0.5,3.2) .. (v3.south)  (-1.1,3.15) node{1};
\draw[-] (v1.north).. controls (-2,4.8) and (0.5,4.8) .. (v4.north)  (-0.9,4.8) node{1};
\draw[-] (v1.north).. controls (-2,5.4) and (2.5,5.4) .. (v6.north)  (0.3,5.3) node{1};

\draw[-] (v3) -- (v4)   (0.5,4.2) node{1};
\draw[-] (v3.north).. controls (0.5,4.8) and (1.5,4.8)   .. (v5.north)  (1,4.85)     node{1};
\draw[-] (v3.south).. controls (0.5,3.1) and (3,3.1) .. (v6.south)  (1.9,3.15)  node{1};

\draw[-] (v4) -- (v5)   (1.5,4.2) node{1};
\draw[-] (v4.south).. controls (1.5,3.4) and (3,3.4) .. (v6.south)  (2.3,3.65) node{1};
\draw[]  (5,4) node[circle,draw](g1){$\gph^y$};

\end{tikzpicture}
\end{center}
\end{minipage}\vspace{2mm}
\caption{Symmetrization of $\gph$ (bottom) gives a pair $(\gph^x,\gph^y)$ of undirected graphs (top and middle).}
\label{fig:G:Gx:Gy}
\end{figure}
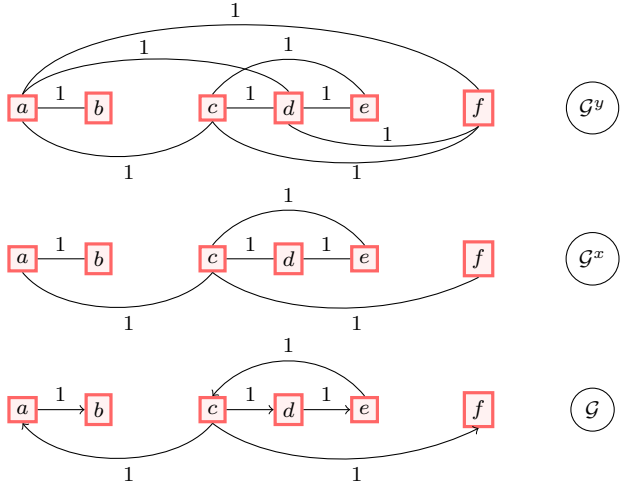

Applying clustering algorithms, e.g., the NHC algorithm in \cite{CMZ:ACHA:18}, to the undirected graphs $\gph^x, \gph^y$, we can obtain  coarse-grained chains $\gph^x_{3\rightarrow0}:=(\gph^x_3,\ldots,\gph^x_0)$ and $\gph^y_{3\rightarrow0}:=(\gph^y_3,\ldots,\gph^y_0)$ of $\gph^x$ and  $\gph^y$, respectively. The coarse-grained chain $\gph^x$ and its associated interval collection $\{\cI_j^x\}_{j=0}^3$ are already shown in Example~1 (or see Fig.~\ref{fig:coarse-grain-toy}).   Now we describe the coarse-grained chain $\gph^y_{3\rightarrow0}$ and its associated interval collections $\{\cI_j^y\}_{j=0}^3$ (see Fig.~\ref{fig:coarse-grain-toy:Gy}).
Based on the hierarchical interval sequences $\{\cI^x_j,\cI_j^y\}_{j=0}^3$, we can build the hierarchical block sequence $\{\cB_j=\cI^x_j\otimes \cI^y_j\}_{j=0}^3$ defined as in \eqref{def:Bxy}. See Fig.~\ref{fig:GxGyG} (top) for the illustration of blocks and the blocks in $\cB_j$ are labelled with a number from 0 to 49 in Fig.~\ref{fig:GxGyG} (bottom).

\begin{itemize}
\item[{1)}] The root node ($j=0$):  $\gph_0^y\longleftrightarrow\cI_0^y:=\{I^y_{0,1}=I=[0,1]\}$. Together with $\cI_0^x=\{[0,1]\}$, we have $\cB_0=\{I^2=[0,1]^2\}$ and $\varphi_0 = \chi_{[0,1]^2}$.
\item [{2)}] At level $j=1$:  $\gph_1^y\longleftrightarrow\cI_1^y:=\{I^y_{1,1}, I^y_{1,2}\}$. The graph $\gph_1^y$ has two nodes $[a]_{\gph_1^y}=\{a,b,d\}$ (degree 9) and $[c]_{\gph_1^y}=\{c,e,f\}$ (degree 9). According to their degrees, we  identify the nodes $[a]_{\gph_1^y}$ and $[c]_{\gph_1^y}$  with the intervals
\[
I^y_{1,1}([a])=\left[0,\frac{9}{18}\right)=\left[0,\frac12\right) \mbox{ and } I^y_{1,2}([c])=\left[\frac{9}{18},\frac{9+9}{18}\right]=\left[\frac12,1\right],
\]
respectively. From $\cI_1^x=\{I_{1,1}^x,I_{1,2}^x\}$, see \eqref{Gx:Ix:1}, $\cB_1=\cI_1^x\otimes\cI_1^y$ has four sub-blocks $B_{1},\ldots,B_{4}$ in $[0,1]^2$. Hence,  by \eqref{def:psijb}, $\Psi_1=\{\psi^{(\ell_1,\ell_2)}_{0,[0,1]^2}\setsep 1\le \ell_1,\ell_2\le 4\}$ has 6 functions.

\item [{3)}]At level $j=2$: $\gph_2^y\longleftrightarrow\cI^y_2:=\{I^y_{2,1},I^y_{2,2},I^y_{2,3}\}$. The graph $\gph^y_2$ has three nodes $[a]_{\gph_2^y}=\{a,b,d\}$ (degree 9), $[c]_{\gph^y_2}=\{c,e\}$ (degree 6), and $[f]_{\gph^y_2}=\{f\}$ (degree 3). Similarly, we  identify them  with the intervals
\[
I^y_{2,1}([a])=\left[0,\frac{1}{2}\right),\, I^y_{2,2}([c])=\left[\frac{1}{2},\frac{5}{6}\right), \,I^y_{2,3}([f])=\left[\frac{5}{6},1\right],
\]
respectively. From $\cI_2^x$ in \eqref{Gx:Ix:2}, $\cB_2=\cI_2^x\otimes\cI_2^y=\{B_5,\ldots,B_{13}\}$ has nine sub-blocks split from $B_1,\ldots,B_4$: $B_{1}$ has  $c_{B_{1}}=1
$ sub-block $B_{5}=B_{1}$; $B_{2}$ has $c_{B_{2}}=2$ sub-blocks $B_{6}$ and  $B_{7}$;$B_{3}$  has $c_{B_{3}}=2$ sub-blocks $B_{8}$ and $B_{11}$;   $B_{4}$ has $c_{B_{4}}=4$ sub-blocks $B_{9}$, $B_{10}$, $B_{12}$, and  $B_{13}$.  Hence, by \eqref{def:psijb}, $\Psi_1=\{\psi^{(\ell_1,\ell_2)}_{1,B_{k}}\setsep 1\le \ell_1,\ell_2\le c_{B_{k}},k=1,2,3,4\}$ has in total ${2\choose 2}+{2\choose 2}+{4\choose 2}=8$ functions.

\item [{4)}] At level $j=3$: $\gph_3^y\longleftrightarrow\cI^y_3:=\{I^y_{3,1},\ldots, I^y_{3,6}\}$. The graph $\gph^y_3$ is the underlying graph $\gph^y$ with 6 vertices. According to their degrees, we  identify the vertices $a,b,d,c,e,f$ with the intervals

\begin{equation}\label{Gy:vertices}
\begin{aligned}
I^y_{3,1}(a)&=\left[0,\frac{2}{9}\right), &
I^y_{3,2}(b)&=\left[\frac{2}{9},\frac{5}{18}\right), &
I^y_{3,3}(d)&=\left[\frac{5}{18},\frac{1}{2}\right), &\\
I^y_{3,4}(c)&=\left[\frac{1}{2},\frac{13}{18}\right),&
I^y_{3,5}(e)&=\left[\frac{13}{18},\frac{5}{6}\right),&
I^y_{3,6}(f) &= \left[\frac{5}{6},1\right],&
\end{aligned}
\end{equation}
respectively. From $\cI_3^x$ in \eqref{Gx:vertices}, $\cB_3=\cI_3^x\otimes \cI_3^y$ has in total 36 blocks $B_{14}$, $\ldots$, $B_{49}$. Hence, by \eqref{def:psijb}, $\Psi_2=\{\psi^{(\ell_1,\ell_2)}_{2,B_{k}}\setsep 1\le \ell_1,\ell_2\le c_{B_{k}},k=5,\ldots,13\}$ containing ${6 \choose 2}+{9\choose 2}+{3 \choose 2}+{4 \choose 2}+{6 \choose 2}+{2 \choose 2}+{2\choose 2}+{3 \choose 2}=15+36+3+6+15+1+1+3=80$ functions.
\end{itemize}
According to Theorem~\ref{thm:AdaDHF:GxGy}, the  system $X(\{\cB_j\}_{j=0}^3)=\{\varphi_0\}\cup\{\Psi_j\}_{j=0}^{2}$ is a tight frame for $L_2(\gph^x,\gph^y)$.

\begin{figure}[htpb!]
\begin{minipage}{\textwidth}
\centering
\begin{minipage}{\textwidth}
\begin{center}
\begin{tikzpicture}[
roundnode/.style={circle, draw=green!60, fill=green!5, very thick, minimum size=7mm},
squarednode/.style={rectangle, draw=red!60, fill=red!5, very thick, minimum size=3mm},
]

\node[squarednode]      (v1)       at (-2.5,-0.3)      {$a$};
\node[squarednode]      (v2)       at (-1.5,-0.3)      {$b$};
\node[squarednode]      (v3)       at (-0.7,-0.3)         {$d$};
\node[squarednode]      (v4)       at (0.7,-0.3)         {$c$};
\node[squarednode]      (v5)       at (1.7,-0.3)         {$e$};
\node[squarednode]      (v6)       at (3.5,-0.3)       {$f$};

\draw[-] (v1) -- (v2)  (-2.0,-0.15) node{1};
\draw[-] (v1.south).. controls (-2.3,-1.1) and (-0.8,-1.1) .. (v3.south)  (-1.5,-0.95) node{1};
\draw[-] (v1.north).. controls (-2,0.5) and (0.5,0.5) .. (v4.north)  (-0.9,0.4) node{1};
\draw[-] (v1.north).. controls (-2,1.1) and (2.5,1.1) .. (v6.north)  (0.35,0.8) node{1};

\draw[-] (v3) -- (v4)   (0.1,-0.15) node{1};
\draw[-] (v3.north).. controls (0.1,0.5) and (1.1,0.5)   .. (v5.north)  (0.6,0.35)     node{1};
\draw[-] (v3.south).. controls (0.5,-1.2) and (3,-1.2) .. (v6.south)  (1.9,-1.05)  node{1};

\draw[-] (v4) -- (v5)   (1.2,-0.15) node{1};
\draw[-] (v4.south).. controls (1.5,-0.9) and (3,-0.9) .. (v6.south)  (2.9,-0.72) node{1};
\draw[]  (5,-0.3) node[circle,draw](g3){$\gph_3^y$};

\draw[]  (-2.4,0.2) node{$[0,\frac{2}{9})$};
\draw[]  (-1.5,0.2) node{$[\frac{2}{9},\frac{5}{18})$};
\draw[]  (-0.6,-0.7) node{$[\frac{5}{18},\frac{1}{2})$};
\draw[]  (0.7,-0.7) node{$[\frac{1}{2},\frac{13}{18})$};
\draw[]  (1.9,-0.7) node{$[\frac{13}{18},\frac{5}{6})$};
\draw[]  (3.3,0.2) node{$[\frac{5}{6},1]$};

\node[squarednode]      (v7)       at (-1.7,2.0)        {$a$ \qquad \;$b$\qquad \;$d$};
\node[squarednode]      (v8)       at (1,2.0)            {$c$ \qquad $e$};
\node[squarednode]      (v9)       at (3.5,2.0)         {$f$};

\draw[<->] (-2.5,2.2) arc (150:30:9mm);
\draw (-1.7,2.5) node{4};
\draw[-] (v7) -- (v8)    (-0.1,2.2) node{3};
\draw[-] (v7.south).. controls (0.3,1.0) and (1.5,1.0)   .. (v9.south)  (1,1.2)     node{2};

\draw[-] (v8) -- (v9)    (2.45,2.2)  node{1};
\draw[<->] (0.5,2.2)  arc (150:30:6mm);
\draw (1,2.35)  node{2};

\draw[]  (5,2.0) node[circle,draw](g2){$\gph^y_2$};
\draw[]  (-2.0,1.6) node{$[0,\frac12)$};
\draw[]  (1.0,1.6) node{$[\frac12,\frac{5}{6})$};
\draw[]  (3.3,1.6) node{$[\frac{5}{6},1]$};

\node[squarednode]      (v10)       at (-1.7,3.4)        {$a$ \qquad \;$b$\qquad \;$d$};
\node[squarednode]      (v11)       at (2.0,3.4)         {$c$\qquad \qquad \;\;$e$\qquad\qquad\;$f$};

\draw[<->] (-2.5,3.6) arc (150:30:9mm);
\draw (-1.7,3.9) node{4};
\draw[-] (v10) -- (v11)    (-0.2,3.6) node{5};

\draw[<->] (0.4,3.7)  arc (120:60:31mm);
\draw (1.9,4.0)  node{4};
\draw[]  (5,3.4) node[circle,draw](g1){$\gph^y_1$};
\draw[]  (-2.0,3.0) node{$[0,\frac12)$};
\draw[]  (1.75,3.0) node{$[\frac12,1]$};

\node[squarednode]      (v12)       at (0.45,4.7)         {$a$\qquad\quad$b$\qquad\quad$d$\qquad\qquad\qquad$c$\quad\qquad $e$\quad\qquad $f$};
\draw[]  (5,4.7) node[circle,draw](g0){$\gph^y_0$};
\draw[]  (0,4.3) node{$[0,1]$};

\draw[->] (g3.north) -- (g2.south);
\draw[->] (g2.north) -- (g1.south);
\draw[->] (g1.north) -- (g0.south);

\end{tikzpicture}
\end{center}
\end{minipage}\vspace{2mm}
\begin{minipage}{0.8\textwidth}
\caption{A coarse-grained chain of $\gph^y$. $\gph^y_3$ is the underlying graph $\gph^y$. $\gph^y_{j-1}$ is from clustering of $\gph^y_j$ for $j=1,2,3$. Note that $\gph^y_0$ has one node only. Here each box represents a node (or cluster) in the graph, the lines represent edges between vertices, and the arc on a same node indicates a self-loop. $\gph^y_0$ can be identified as the root interval $I^y=[0,1]$, $\gph^y_1$ as $[0,\frac12)\cup [\frac12,1]$, $\gph^y_2$ as $[0,\frac12)\cup[\frac12,\frac{5}{6})\cup[\frac{5}{6},1]$, and $\gph^y_3$ as $[0,\frac{2}{9})\cup[\frac{2}{9},\frac{5}{18})\cup[\frac{5}{18},\frac{1}{2})\cup[\frac{1}{2},\frac{13}{18})\cup[\frac{13}{18},\frac{5}{6})\cup[\frac{5}{6},1]$.}
\label{fig:coarse-grain-toy:Gy}
\end{minipage}
\end{minipage}
\end{figure}

\begin{figure}[htbp!]
\centering
\includegraphics[width=4.6in]{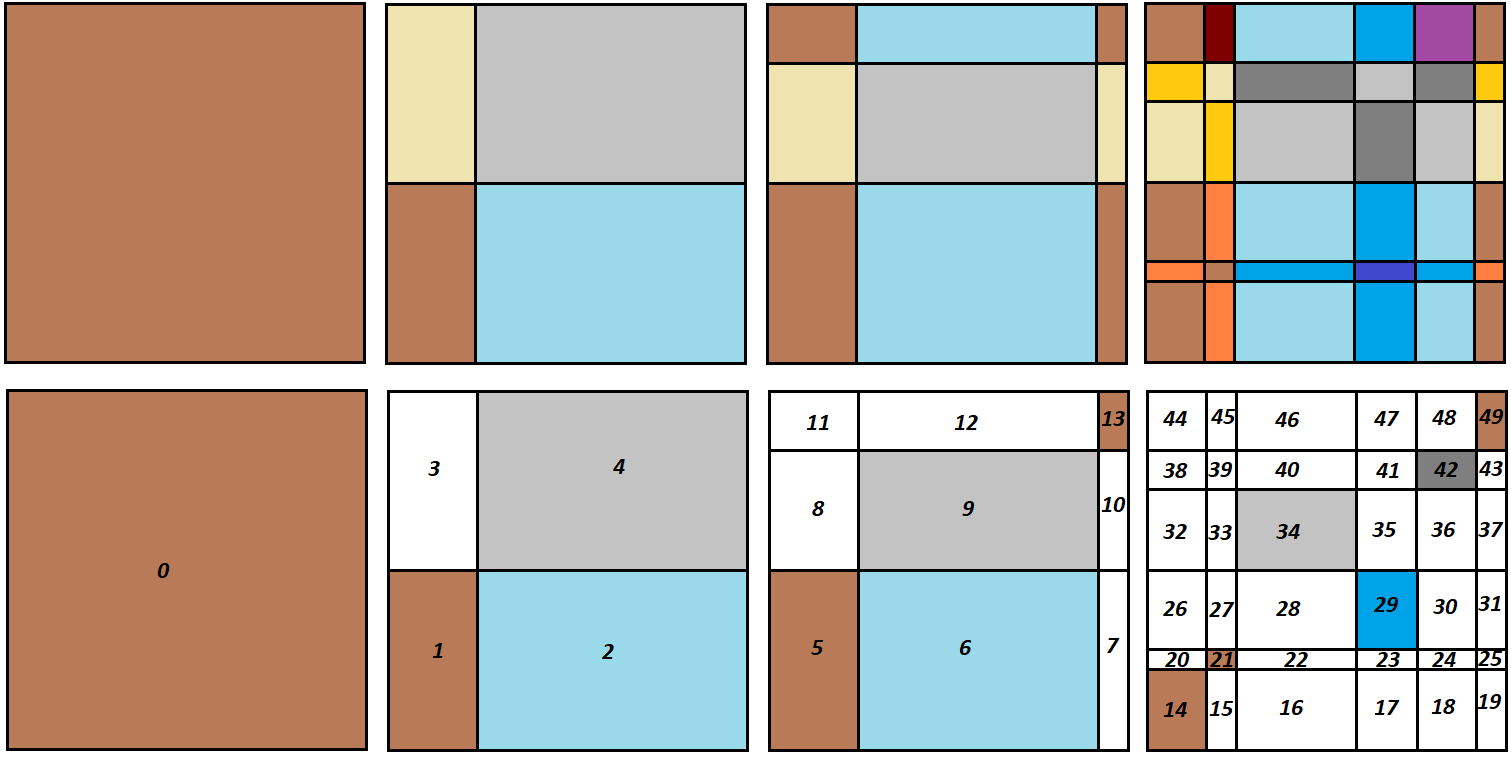}
\caption{Each big block is the unit square $[0,1]^2$ (totally 8). Top 4 unit squares (left to right): $\cB_0, \cB_1,\cB_2,\cB_3$ from the coarse-grained chains $\gph^x_{3\rightarrow0}$ ($\cI^x_j, j=0,\ldots,3$) and $\gph^y_{3\rightarrow0}$ ($\cI^y_j, j=0,\ldots,3$) in Example~2. Horizontal axis is $x$ while the vertical axis is $y$.
 Each colored sub-block represents $B=I_x\times I_y$ for some  $I_x\in \cI^x_j$ and $I_y\in \cI_j^y$. Note that the top-right square contains 36 sub-blocks from $\cI^x_3\otimes \cI_3^y$ with respect to $V\times V$ in $\gph^x=(V,W^x)$ and $\gph^y=(V,W^y)$. Bottom 4 unit squares: sub-blocks in each of $\cB_j$  are selected so that $|B\cap B_v|\neq0$ for some $v\in V$. White blocks are those discarded ones.  This is with respect to the system $X(\{\cB_j\}_{j=0}^3|_\gph)$. Each block in the unit square is labelled with a number from 0 to 49. The blocks $B_{14},B_{21}, B_{34},B_{29}, B_{42}, B_{49}$ are the vertices $a, b, c, d,e, f$ in the digraph, respectively. }
\label{fig:GxGyG}
\end{figure}

Next, we focus on the tight frame $X(\{\cB_j\}_{j=0}^3|_\gph)$ for $L_2(\gph)$ as in Corollary~\ref{cor:AdaDHF:G}. Note that the vertices $a,b,c,d,e,f$ in the  digraph $\gph$ are with respect to blocks
\begin{equation}\label{G:vertices}
\begin{small}
\begin{aligned}
B_a &= \left[0,\frac16\right)\times \left[0,\frac29\right),&
B_b &= \left[\frac16,\frac14\right)\times \left[\frac29,\frac{5}{18}\right), &
B_c &= \left[\frac14,\frac{7}{12}\right)\times \left[\frac12,\frac{13}{18}\right),&  \\
B_d &= \left[\frac{7}{12}, \frac34\right)\times \left[\frac{5}{18},\frac12\right),&
B_e &= \left[\frac34,\frac{11}{12}\right)\times \left[\frac{13}{18},\frac56\right),&
B_f &= \left[\frac{11}{12},1\right]\times \left[\frac56,1\right]. &
\end{aligned}
\end{small}
\end{equation}
The functions in  $X(\{\cB_j\}_{j=0}^3|_\gph)$ are as follows.
\begin{itemize}
\item[{\rm a)}] Supported on $\cB_0$: $\{\varphi_0\}\cup\{\psi_{0,[0,1]^2}^{(\ell_1,\ell_2)}\setsep 1\le\ell_1<\ell_2\le 4\}$. There are $7=1+6$ functions with support intersecting $B_v$  in \eqref{G:vertices}.

\item[{\rm b)}] Supported on $\cB_1$:  $\psi^{(\ell_1,\ell_2)}_{1,B_k}$ for $k=2,4$ and $1\le \ell_1<\ell_2<c_{B_k}$. There are $6=1+5$ functions with support intersecting $B_v$  in \eqref{G:vertices}.

\item[{\rm c)}] Supported on $\cB_2$: $\psi^{(\ell_1,\ell_2)}_{2,B_k}$ for $k=5,6,9$ and $1\le \ell_1<\ell_2<c_{B_k}$. There are $26=9+8+9$ functions with support intersecting $B_v$ in \eqref{G:vertices}.

\end{itemize}
We see that the number of functions in $X(\{\cB_j\}_{j=0}^3|_\gph)$ (39) is significantly less than  those in $X(\{\cB_j\}_{j=0}^3)$ (95). We remark that the 26 functions from $\cB_2$ could be further reduced in view of the non-effective blocks in $\cB_3$. For example, the block $B_6$ has 9 sub-blocks (\#16, \#17, \#18, \#22, \#23, \#24, \#28, \#29, \#30). These produce 8 functions  $\psi^{(\ell_1,\ell_2)}_{2,B_6}$ in $X(\{\cB_j\}_{j=0}^3|_\gph)$. But since the block $B_{29}$ representing the vertex $d$ is the only effective block,  one function, say $\psi_{2,B_6}^{(29,30)}$, is indeed enough. The total number of functions in $X(\{\cB_j\}_{j=0}^3|_\gph)$ could be reduced to $20=7(\mbox{in } \cB_0)+6(\mbox{in }\cB_1)+7(\mbox{in }\cB_2)$.


\section{Final remarks}
\label{sec:remarks}
We conclude the paper by some remarks and potential future work.
\begin{itemize}
\item[{\rm 1)}] In \cite{treepap,CMZ:ACHA:18}, the systems used to represent graph (digraph) signals are orthonormal. They are obtained through the tensor product approach of two 1D orthonormal systems coming from the Gram–Schmidt process. The tensor product orthonormal system is used for the digraph signal representation. In this paper, we do not use the Gram-Schmidt process but simply rely on the selection of  two sub-blocks. The resulted systems are tight frame systems and could have more directionality than the tensor product approach orthonormal systems. Note that when each block has at most two children sub-blocks, our resulted system is also orthonormal. However, such an orthonormal system is different to those in \cite{treepap,CMZ:ACHA:18}.

\item[{\rm 2)}] Our construction of the system $X(\{\cB_j\}_{j\in\N_0})$ is not necessarily restricted to compact subsets $K\subseteq\R^d$. It is possible to extend the construction of the system on any $\sigma$-finite  measurable subsets of $\R^d$. More generally, our construction could be extended to abstract measurable spaces as well as compact Riemannian manifolds.

\item[{\rm 3)}]Since it is Haar-type, the framelet $\psi\in \Psi_{j,B}$ has only vanishing moment of order 1: $\int_K\psi(x) dx=0$. To increase the sparse representation ability, one could consider the construction of similar tight framelet systems with higher vanishing moments.

\item[{\rm 4)}] The number of elements in $X(\{\cB_j\}_{j=0}^J|_\gph)$ could be significantly reduced  by discarding non-effective framelet functions as pointed out in the end of Example~2.

\item[{\rm 5)}] Since there is a natural nested structure of the block sequences, fast algorithms could be developed for  framelet transforms.

\end{itemize}

\section{Proofs}
\label{sec:proofs}
We provide proofs of Lemma~\ref{lem:A} and Lemma~\ref{lem:Psi_B} here.

\begin{proof}[Proof of Lemma~\ref{lem:A}]
We prove that $A^\top A=\Id_m$ by showing that the columns of $A$ are orthogonal.  Indeed,  the 2-norm of the $\ell$-th column of $A$ is
\[
\begin{aligned}
\sum_{i=0}^n |a_{i,\ell}|^2
&=|a_{0,\ell}|^2+\sum_{1\le i_1<i_2\le m} |a_{(i_1,i_2),\ell}|^2\\
&=|a_{0,\ell}|^2+\sum_{1\le i_1<\ell\le m} |a_{(i_1,\ell),\ell}|^2+\sum_{1\le \ell<i_2\le m} |a_{(\ell,i_2),\ell}|^2\\
&=b_\ell+\sum_{i_1< \ell }b_{i_1} + \sum_{i_2> \ell }b_{i_2}= 1,\quad 1\le \ell\le m.
\end{aligned}
\]
Moreover,  the dot product of the $\ell_1$-th column and the $\ell_2$-th column of $A$ with $\ell_1<\ell_2$ is
\[
\begin{aligned}
&\sum_{i=0}^n a_{i,\ell_1}a_{i,\ell_2}
=a_{0,\ell_1}a_{0,\ell_2}+\sum_{1\le i_1<i_2\le m} a_{(i_1,i_2),\ell_1}a_{(i_1,i_2),\ell_2}\\
&=\sqrt{b_{\ell_1}b_{\ell_2}}+\sum_{i_2>\ell_1}a_{(\ell_1,i_2),\ell_1}a_{(\ell_1,i_2),\ell_2}+
\sum_{i_1<\ell_1}a_{(i_1,\ell_1),\ell_1}a_{(i_1,\ell_1),\ell_2}\\
&\quad+
\sum_{i_2>\ell_2}a_{(\ell_2,i_2),\ell_1}a_{(\ell_2,i_2),\ell_2}
+
\sum_{i_1<\ell_2, i_1\neq \ell_1}a_{(i_1,\ell_2),\ell_1}a_{(i_1,\ell_2),\ell_2}\\
&=\sqrt{b_{\ell_1}b_{\ell_2}}+\sum_{i_2>\ell_1}a_{(\ell_1,i_2),\ell_1}a_{(\ell_1,i_2),\ell_2}=\sqrt{b_{\ell_1}b_{\ell_2}}+\sum_{i_2>\ell_1}\sqrt{b_{i_2}}(-\sqrt{b_{\ell_1}}\delta_{i_2,\ell_2})
\\
&=\sqrt{b_{\ell_1}b_{\ell_2}}-\sqrt{b_{\ell_2}}\sqrt{b_{\ell_1}}=0.
\end{aligned}
\]
Similarly, $\sum_{i=0}^n a_{i,\ell_1}a_{i,\ell_2}=0$ for $\ell_1>\ell_2$. Hence, $A^\top A = \Id_m$ is an identity matrix. $\blacksquare$
\end{proof}

\medskip

\begin{proof}[Proof of Lemma~\ref{lem:Psi_B}]
To prove that $\Psi_{B}$ is tight, by linearity,  it suffices to show that
\[
\psi^{(\ell_1,\ell_2)}=\sum_{1\le \ell_1'<\ell_2'\le m}\la \psi^{(\ell_1,\ell_2)}, \psi^{(\ell_1',\ell_2')}\ra\psi^{(\ell_1',\ell_2')}, \quad 1\le \ell_1<\ell_2\le m.
\]
In fact, from \eqref{def:psi_ell}, we have
$\psi^{(\ell_1,\ell_2)} = \sqrt{b_{\ell_2}}\gamma_{\ell_1}-\sqrt{b_{\ell_1}}\gamma_{\ell_2}
=\sum_{\ell=1}^{m}a_{(\ell_1,\ell_2),\ell}\gamma_\ell$,
where $(a_{(\ell_1,\ell_2),\ell})_{\ell=1}^{m}$ is the row  of the matrix $A$ in Lemma~\ref{lem:A}.  By $A^\top A= I$ and the orthogonality of $\{\gamma_\ell\setsep 1\le\ell\le m\}$,  we have
\[
\begin{aligned}
&\sum_{1\le \ell_1'<\ell_2'\le m}\la \psi^{(\ell_1,\ell_2)},\psi^{(\ell_1',\ell_2')}\ra\psi^{(\ell_1',\ell_2')}\\
=&\sum_{1\le \ell_1'<\ell_2'\le m}(\sum_{\ell}a_{(\ell_1,\ell_2),\ell}a_{(\ell_1',\ell_2'),\ell}\sum_{\tilde \ell}a_{(\ell_1',\ell_2'),\tilde\ell}\gamma_{\tilde\ell})\\
=&
\sum_{\ell}a_{(\ell_1,\ell_2),\ell}\sum_{\tilde \ell}\sum_{1\le \ell_1'<\ell_2'\le m}a_{(\ell_1',\ell_2'),\ell}a_{(\ell_1',\ell_2'),\tilde\ell}\gamma_{\tilde\ell}\\
=&\sum_{\ell}a_{(\ell_1,\ell_2),\ell}\sum_{\tilde\ell}(\delta_{\ell,\tilde\ell}-\sqrt{b_\ell b_{\tilde\ell}})\gamma_{\tilde\ell}
= \sum_{\ell}a_{(\ell_1,\ell_2),\ell}(\gamma_\ell-\sum_{\tilde\ell}\sqrt{b_\ell b_{\tilde\ell}}\gamma_{\tilde\ell})
\\
=& (\sqrt{b_{\ell_2}}\gamma_{\ell_1}-\sum_{\tilde\ell}\sqrt{b_{\ell_1}b_{\ell_2} b_{\tilde\ell}}\gamma_{\tilde\ell})-(\sqrt{b_{\ell_1}}\gamma_{\ell_2}-\sum_{\tilde\ell}\sqrt{b_{\ell_2}b_{\ell_1} b_{\tilde\ell}}\gamma_{\tilde\ell})\\
=& \sqrt{b_{\ell_2}}\gamma_{\ell_1}-\sqrt{b_{\ell_1}}\gamma_{\ell_2}=\psi^{(\ell_1,\ell_2)},
\end{aligned}
\]
where the $3^{\rm rd}$ equation follows from that
$\sqrt{b_\ell b_{\tilde\ell}}+\sum_{1\le \ell_1'<\ell_2'\le m}a_{(\ell_1',\ell_2'),\ell}a_{(\ell_1',\ell_2'),\tilde\ell}$
is the dot product of the $\ell$-th column and $\tilde\ell$-th column of $A$, which equals to $\delta_{\ell,\tilde\ell}$. Therefore, $\Psi_B$ is a tight frame for $\cW_B$. $\blacksquare$
\end{proof}

\bibliographystyle{spmpsci}      

\begin{thebibliography}{}
%
%

\bibitem{Bronstein_etal2017}
M.~M. Bronstein, J.~Bruna, Y.~LeCun, A.~Szlam, and P.~Vandergheynst, Geometric deep learning: Going beyond euclidean data, \emph{IEEE Signal Processing Magazine}, 34 (4),
  18--42,  2017.

\bibitem{CandesDonoho}
E. J. Cand\`{e}s and D. L.  Donoho, New tight frames of curvelets and optimal representations of objects with piecewise $C^2$ singularities,
 {\em Communications on Pure and Applied Mathematics}, 57 (2), 219--266, 2004.

\bibitem{AST2}
Z. Che and X. Zhuang,  Digital affine shear filter banks with 2-Layer structure and their applications in image processing, {\em  IEEE Transactions on Image Processing},  27 (8): 3931--3941, 2018.

\bibitem{qiyu2}
C. Cheng, N. Emirov, and Q. Sun, Preconditioned gradient descent algorithm for inverse filtering on spatially distributed Networks, arXiv:2007.11491, 2020.

\bibitem{Book:Chui}
C.~K. Chui.
\newblock \emph{An Introduction to Wavelets}, volume~1 of \emph{Wavelet
  Analysis and its Applications},  Academic Press, Inc., Boston, MA, 1992.

\bibitem{achaspissue}
C.~K. Chui and D.~L. Donoho, Special issue: Diffusion maps and wavelets,  {\em Applied and Computational Harmonic Analysis}, 21 (1), 2006.


\bibitem{treepap}
C.~K. Chui, F.~Filbir, and H.~N. Mhaskar,  Representation of functions on big data: graphs and trees.
 {\em Applied and Computational Harmonic Analysis}, 38(3), 489--509,  2015.

\bibitem{CMZ:ACHA:18}
C. K. Chui, H. N. Mhaskar, and X. Zhuang,  Representation of functions on big data associated with directed graphs, \emph{Applied and Computational Harmonic Analysis}, 44 (1), 165--188, 2018.


\bibitem{chung1997spectral}
F.~R.~K. Chung,  {\em Spectral Graph Theory}, volume~92,  American Mathematical Soc., 1997.

\bibitem{chung_directed_laplacian}
F.~Chung,  Laplacians and the cheeger inequality for directed graphs,
 {\em Annals of Combinatorics}, 9 (1),1--19, 2005.

\bibitem{CDV}
A. Cohen, I. Daubechies, and P. Vial, Wavelets on the interval and fast wavelet transforms,
{\em Applied and Computational Harmonic Analysis}, 1 (1),  54--81, 1993.




\bibitem{Book:Daubechies1992}
I. Daubechies, \emph{Ten Lectures on Wavelets}, volume~61 of \emph{CBMS-NSF Regional
  Conference Series in Applied Mathematics}, Society for Industrial and Applied Mathematics (SIAM), Philadelphia,  PA, 1992.

\bibitem{Dong2017}
B. Dong, Sparse representation on graphs by tight wavelet frames and applications,  \emph{Applied and Computational Harmonic Analysis}, 42 (3), 452--479, 2017.

\bibitem{shearlab}
D. L. Donoho, G. Kutyniok,  M. Shahram, and X. Zhuang,  A rational design of a digital shearlet transform, { \em The 9th International Conference on Sampling Theory and Applications (SampTA'11)}, Singapore, 2011.



\bibitem{qiyu3}
N. Emirov, C. Cheng, J.  Jiang, and Q. Sun, Polynomial graph filter of multiple shifts and distributed implementation of inverse filtering,  arXiv:2003.11152, 2020.

\bibitem{arjuna2013}
C.~Garcia-Cardona, E.~Merkurjev, A.~L. Bertozzi, A.~Flenner, and A.~G. Percus,  Fast multiclass segmentation using diffuse interface methods on  graphs,
{\em Technical report}, DTIC Document, 2013.

\bibitem{gavish2010multiscale}
M.~Gavish, B.~Nadler, and R.~R. Coifman,  Multiscale wavelets on trees, graphs and high dimensional data:  Theory and applications to semi supervised learning, {\em Proceedings of the 27th International Conference on Machine Learning (ICML-10)}, 367--374, 2010.




\bibitem{Haar1910}
A. Haar, Zur theorie der orthogonalen funktionensysteme, \emph{Mathematische Annalen}, 69 (3), 331--371,  1910.




\bibitem{hammond}
D.~K. Hammond, P.~Vandergheynst, and R.~Gribonval,  Wavelets on graphs via spectral graph theory,
 {\em Applied and Computational Harmonic Analysis}, 30(2),129--150,  2011.

\bibitem{Book:Han}
B. Han, \emph{Framelets and Wavelets: Algorithms, Analysis, and  Applications}, Birkhäuser, 2017.

\bibitem{han2012extended}
X.~Han, Y.~Chen, J.~Shi, and Z.~He,  An extended cell transmission model based on digraph for urban
  traffic road network,  {\em 15th   International IEEE Conference on Intelligent Transportation Systems (ITSC), 2012},  558--563,  2012.


\bibitem{HLZ:AML}
B. Han, T. Li,  and X. Zhuang, Directional compactly supported box spline tight framelets with simple geometric structure,  {\em Applied Mathematics Letters},  91, 213--219, 2019.

\bibitem{HM:AA}
B. Han and M. Michelle, Construction of wavelets and framelets on a bounded interval, {\em Analysis and Applications}, 16 (06), 807--849, 2018


\bibitem{TPCTF2}
B. Han, Q.  Mo, Z. Zhao, and X. Zhuang, Directional compactly supported tensor product complex tight framelets with applications to image denoising and inpainting, {\em SIAM Journal on Imaging Sciences}, 12 (4), 1739--1771, 2019.




\bibitem{TPCTF1}
B. Han, Z. Zhao, and X. Zhuang, Directional tensor product complex tight framelets with low redundancy, {\em Applied and Computational Harmonic Analysis},  41 (2),  603--637, 2016.



\bibitem{HanZhuang:alg.num}
B. Han and X. Zhuang,  Algorithms for matrix extension and orthogonal wavelet filter banks over algebraic number fields,  {\em Mathematics of Computation}, 82 (281), 459--490, 2013.


\bibitem{AST0}
B. Han and X. Zhuang, Smooth affine shear tight frames with MRA structures, {\em Applied and Computational Harmonic Analysis},  39 (2), 300--338, 2015.


\bibitem{Qiyu}
J. Jiang, D. B. Tay, Q. Sun and S. Ouyang, Design of nonsubsampled graph filter banks via lifting schemes, \emph{IEEE Signal Processing Letters},  27, 441--445, 2020.



\bibitem{Book:Shearlets}
G. Kutyniok and D. Labate., \emph{Shearlets: Multiscale Analysis for Multivariate Data}, Applied and Numerical Harmonic Analysis, Springer Science \& Business Media, 2012.






\bibitem{lafonncut}
S.~Lafon and A.~B. Lee,  Diffusion maps and coarse-graining: A unified framework for
  dimensionality reduction, graph partitioning, and data set parameterization,  {\em  IEEE Transactions
  on Pattern Analysis and Machine Intelligence}, 28 (9),1393--1403, 2006.



\bibitem{Wa:NN}
M. Li, Z. Ma, Y. G. Wang,  and X. Zhuang, Fast Haar transform for graph neural networks, \emph{Neural Networks}, 128, 188--198, 2020.


\bibitem{Li:DHF}
Y.-R. Li, R. H. Chan, L.  Shen, Y.-C. Hsu, and W.-Y. I. Tseng,  An adaptive directional Haar framelet-based reconstruction algorithm for parallel magnetic resonance imaging, \emph{SIAM Journal on Imaging Sciences},  9 (2), 794--821, 2016.

\bibitem{li2012digraph}
Y.~Li and Z.-L. Zhang, Digraph laplacian and the degree of asymmetry.  {\em Internet Mathematics}, 8 (4), 381--401, 2012.


\bibitem{Li:spie}
Y.-R. Li and X. Zhuang,  Parallel magnetic resonance imaging reconstruction algorithm by 3-dimension directional Haar tight framelet regularization,  {\em Wavelets and Sparsity XVIII}, SPIE Proc. 11138-47, 2019.

\bibitem{hodge_laplacian_lim2015}
L.-H. Lim, Hodge laplacians on graphs, arXiv:1507.05379, 2015.


\bibitem{Book:Mallat}
S. Mallat, \emph{A Wavelet Tour of Signal Processing. The Sparse Way, With
  Contributions from Gabriel Peyr{\'e}},  Elsevier/Academic Press,  Amsterdam, {T}hird edition, 2009.

\bibitem{malliaros2013clustering}
F.~D. Malliaros and M.~Vazirgiannis,  Clustering and community detection in directed networks: A survey,  {\em Physics Reports}, 533 (4), 95--142, 2013.


\bibitem{newman2003structure}
M.~E. Newman,  The structure and function of complex networks,  {\em SIAM review}, 45(2), 167--256, 2003.




\bibitem{Pesenson}
I. Pesenson, Sampling in Paley-Wiener spaces on combinatorial graphs,  {\em Transactions of the American Mathematical Society}, 360 (10),  5603--5627, 2008.

\bibitem{Pesenson2}
I. Pesenson, Q. T. Le Gia, A. Mayeli, H. Mhaskar, D. X. Zhou,
{\em Recent Applications of Harmonic Analysis to Function Spaces, Differential Equations, and Data Science: Novel Methods in Harmonic Analysis}, Volume 2, Applied and Numerical Harmonic Analysis, Birkh\"{a}user, 2017


\bibitem{przulj2011introduction}
N.~Przulj, Introduction to the special issue on biological networks, {\em Internet Mathematics}, 7 (4), 207--208, 2011.

\bibitem{diagraphcluster_ohiostate_2011}
V.~Satuluri and S.~Parthasarathy,  Symmetrizations for clustering directed graphs,  {\em ACM Proceedings of the 14th International Conference on Extending   Database Technology},  343--354, 2011.



\bibitem{DWT}
I. W. Selesnick, R. G. Baraniuk and N. C. Kingsbury, The dual-tree complex wavelet transform, {\em IEEE Signal Processing Magazine},  22 (6),  123--151, 2005.


\bibitem{singer}
A.~Singer,  From graph to manifold {L}aplacian: The convergence rate, {\em Applied and Computational Harmonic Analysis}, 21 (1), 128--134,  2006.

\bibitem{smith2013role}
L.~M. Smith, L.~Zhu, K.~Lerman, and Z.~Kozareva, The role of social media in the discussion of controversial topics,   {\em 2013 IEEE International Conference
  on Social Computing (SocialCom)},  236--243,  2013.

\bibitem{Book:Stein1993}
E. M. Stein, \emph{Harmonic Analysis: Real-Variable Methods, Orthogonality, and
  Oscillatory Integrals}, Princeton University Press, 1993.





\bibitem{van2001graph}
S.~M. Van~Dongen,  {\em Graph Clustering by Flow Simulation},  PhD thesis, University of Utrecht, 2001.


\bibitem{WaZh:spie}
Y. G. Wang and X.  Zhuang,   Tight framelets on graphs for multiscale data analysis, \emph{Wavelets and Sparsity XVIII}, SPIE Proc. 11138-11, 2019.

\bibitem{WaZh2018}
Y. G. Wang and X. Zhuang, Tight framelets and fast framelet filter bank transforms on
  manifolds,  \emph{Applied and Computational Harmonic Analysis}, 48 (1), 64--95, 2020.

\bibitem{Wa:ICML}
Y. G. Wang, M. Li, Z. Ma, G. Montufar, X. Zhuang, and Y. Fan,  Haar graph pooling,  \emph{Proceedings of ICML 2020 (ICML 2020)},  3807--3817, 2020.



\bibitem{AST1}
X. Zhuang,   Digital affine shear transforms: fast realization and applications in image/video processing, {\em SIAM Journal on Imaging Sciences},  9 (3), 1437--1466, 2016.

\bibitem{TPCTF3}
X. Zhuang and B. Han, Compactly supported tensor product complex tight framelets with directionality, {\em 2019 International Conference on Sampling Theory and Applications (SampTA)}, Bordeaux, France. 1--5, 2019.


\end{thebibliography}


\end{document}